\definecolor{mypink1}{rgb}{0.858, 0.188, 0.478}
\def\fskip#1{}
\newtheorem{theorem}{Theorem}
\newtheorem{assumption}{Assumption}
\newtheorem{lemma}{Lemma}
\newtheorem{proposition}[theorem]{Proposition}
\def\argmin{\mathop{\rm argmin}}
\def\argmax{\mathop{\rm argmax}}
\newcommand{\Real}{\ensuremath{\mathbb{R}}}
\newcommand{\thetahat}{\widehat{\theta}}
\def\be{\begin{enumerate}}
\def\ee{\end{enumerate}}
\def\st{\mbox{subject to}}
		\def\bkE{{\rm I\kern-.17em E}}
		\def\bk1{{\rm 1\kern-.17em l}}
		\def\bkD{{\rm I\kern-.17em D}}
		\def\bkR{{\rm I\kern-.17em R}}
		\def\bkP{{\rm I\kern-.17em P}}
		\def\bkY{{\bf \kern-.17em Y}}
		\def\bkZ{{\bf \kern-.17em Z}}
\def\k{\kappa}
\def\xbar{\bar{x}}
\def\R{\mathbb{R}}
\def\argmin{\mathop{\rm argmin}}
\title{\LARGE \bf On the rate analysis of inexact augmented Lagrangian schemes for convex optimization problems with misspecified constraints}
\author{H.~Ahmadi \and N.~S.~Aybat \and  U.~V.~Shanbhag\thanks{Ahmadi,
	Aybat, and Shanbhag are
with the Department of Industrial and Manufacturing
Engineering, respectively at the Pennsylvania State University,
	University Park, PA-16803. They are reachable at ({\tt
		ahmadi.hesam@gmail.com,nsa10,udaybag@psu.edu}) and their research has been
partially funded by NSF Grant CMMI-1400217.}}
\begin{document}
\maketitle
\thispagestyle{empty}
\pagestyle{empty}
\begin{abstract}
We consider a misspecified optimization problem that requires minimizing
of a convex function $f(x;\theta^*)$ in $x$  over a constraint set represented by
$h(x;\theta^*) \leq 0$ where $\theta^*$ is an unknown (or misspecified) vector of
parameters. Suppose $\theta^*$ is learnt by a distinct process that generates a
sequence of estimators $\theta_k$, each of which is an increasingly accurate
approximation of $\theta^*$. We develop a
%constant joint
first-order augmented Lagrangian scheme for computing an optimal solution $x^*$ while simultaneously learning $\theta^*$.
\end{abstract}
\section{Introduction}
Consider an
optimization problem in $n-$dimensional space %given by (${\cal C}(\theta^*)$)
defined as follows:
\begin{align}
\mathcal{X}^*(\theta^*):=\argmin_{x \in X \cap \mathcal{H}(\theta^*)} f(x;\theta^*), \tag{${\cal C}(\theta^*)$}
\end{align}
where $\theta^*\in\R^d$ denotes the parametrization of the objective and
constraints. While traditionally, optimization research has considered settings where $\theta^*$ is available a priori, two related
	problems of interest have considered regimes where either the
		parameter is unavailable ({\it robust} optimization) or when it
		is uncertain ({\it stochastic} optimization:)\\
%	stochastic optimization has considered regimes where the parameter
%is uncertain} there also have been concerted efforts
%to consider least two distinct generalizations when
%for the case ${\cal H}(\theta^*)= \Real^n$ and \ser{$f$ depends on the \emph{unknown} parameter}, namely {\it robust} and {\it stochastic} optimization.\\
\noindent {\bf Robust approaches~\cite{bental09robust}.} For instance, when
$\theta^*$ is unavailable, but one has access to an associated uncertainty set
${\cal T}$, %corresponding to \ser{$\theta^*$},
then in robust optimization, %approaches
the worst-case value of the objective is minimized:
\begin{align}\min_{x \in X } \ \max_{ \theta \in {\cal T}} f(x;\theta).
	\tag{Robust Optimization}
\end{align}
\noindent {\bf Stochastic approaches~\cite{shapiro09lectures}.} An alternative approach considers
an uncertain regime where $\theta: \Omega \to \Real^d$ is an
$d-$dimensional random vector %variable
defined on a suitable probability space.
The resulting stochastic optimization schemes consider the minimization
of an expectation:
\begin{align}\min_{x \in X } \ \mathbb{E}[f(x;\theta)].
	\tag{Stochastic Optimization}
\end{align}
In this paper, we consider a different approach in which the parameter vector $\theta$ has a
nominal or true value $\theta^*$ obtainable by solving a
suitably defined learning problem:
\begin{align}
\min_{\theta \in \Theta} \  \ell(\theta). \tag{${\cal E}$}
\end{align}
Instances of such problems routinely arise when $\theta^*$ is
idiosyncratic to the problem and may be learnt by the aggregation of
data; examples include the following: the learning of covariance
matrices associated with a collection of stocks, efficiency parameters
associated with machines on a supply line, and demand parameters
associated with a supply chain. A natural approach in this case is to
first estimate $\theta^*$ with high accuracy and then to solve the
parametrized problem. Yet, in many
instances, this {\it sequential} approach cannot be adopted for several reasons: (i) observations unavailable a priori and appear in a
streaming fashion;
(ii) the learning problem can be large, precluding a highly accurate a priori
resolution;
(iii) unless the learning problem can be solved {\it exactly} in finite
time, any sequential scheme may provide approximate solutions, at best.
\\
Accordingly, we consider the development of schemes that generate
sequences $\{x_k\}, \{\theta_k\}$ such that
\begin{align*}
%\pmat{\|\theta_k - \theta^*\| \\
%			\|x_k -x^*\|} \to 0  \mbox{ as } k \to
%			\infty,
\|\theta_k - \theta^*\| \to 0,\quad d_{\mathcal{X}^*(\theta^*)(x_k) \to 0\ \mbox{ as } k \to \infty,}
\end{align*}
where $\theta^*$ is the unique solution of (${\cal E}$) and for a given closed convex set $\cal X$, $d_{\mathcal{X}}(x)\triangleq\min_{s\in \mathcal{X}} \|x-s\|$
%$x^*$ is a solution to (${\cal C}(\theta^*)$).
denotes the distance function to $\cal X$. {This framework originates from prior work
%that has
on stochastic optimization/variational inequality
	problems~\cite{jiang13solution} and stochastic Nash
		games~\cite{jiang16nash}. In recent work, %as well as
%a refinement of
the rate statements derived in~\cite{jiang13solution} are refined to the
deterministic regime~\cite{ahmadi14data}.}
In~\cite{ahmadi15misspecified}, misspecification in the constraints is addressed in a convex
regime via variational inequality approaches; in sharp contrast, in this paper,
we develop a misspecified analog of the augmented Lagrangian scheme for
misspecified convex problems in which both the objective and the constraints are misspecified.
Augmented Lagrangian schemes are rooted in the seminal papers by Hestenes~\cite{hestenes1969multiplier}
and Powell~\cite{Powell69_1J}, and their relation to proximal-point
methods was established by Rockafellar~\cite{Rockafellar73_1J,rockafellar1973dual}. Recently, there has been a renewed examination
of such techniques in convex regimes, with an emphasis on deriving convergence rates~\cite{aybat2013augmented,lan15_1J,1506.05320 }.

In this paper, we develop an analog of the traditional augmented
Lagrangian scheme in which the subproblems are solved with increasing
exactness. Our contributions include  rate statements for the dual
suboptimality, the primal infeasibility, and the primal suboptimality in
this misspecified regime.  Throughout, our focus will be on the problem
(${\cal C}(\theta^*)$) when $ {\cal H}(\theta^*) \triangleq \{ x:
h(x;\theta^*) \leq 0 \}$, i.e.,
%and
%\sout{$f(x;\theta^*) = p(x;\theta^*) + q(x;\theta^*), $ as given by}
\begin{align}
\tag{${\cal C}(\theta)$}
\begin{aligned}
\quad \min_{x}\quad f(x;\theta)&\\
 \mbox{\st}\quad h(x;\theta)&\leq 0,\   x \in X,
	\end{aligned}
\end{align}
where $f:\R^n\times \Theta \to \R\cup\{+\infty\}$,
%\sout{$q:{\cal D}_q\times\Theta\to \R$},
$h:\R^n\times \Theta\to \R^m$ and $\theta \in \Theta \subseteq \R^d$ denotes an estimate for the misspecified parameter $\theta^*$. %where $\theta \in \Theta \subseteq \R^d$,
Throughout, we assume that %$X\subset\R^n$ is a compact convex set %\sout{and ${\cal D}_q$ are subset of $\R^n$.}
%\sout{for purposes of well-posedness}
${\cal C}(\theta^*)$ has
a finite optimal value, given by $f^*$, the corresponding Lagrangian
dual problem has a solution, denoted by $\lambda^*$, and there is {\it
	no} duality gap. The remainder of the paper comprises of three
	sections. We provide preliminaries in
Section~\ref{Sec:II}, the main rate statements in Section~\ref{Sec:III},
and conclude in Section~\ref{Sec:4}.
%\ser{COMMENT: What are our assumptions on $\Theta$? Compact?}
\section{Preliminaries}\label{Sec:II}
The problem ${\cal C}(\theta)$ is equivalent to
\begin{align}
\min & \left\{f(x;\theta):%+{\rho\over 2}\|h(x;\theta))+z\|^2:
h(x;\theta)+z=0, \quad x\in X,\quad z\in
\mathbb{\R}_+^m\right\}.\label{formul:1}
\end{align}
Let $\lambda\in \R^m$ denote the vector dual variables corresponding to the equality constraints in \eqref{formul:1}. For any given $\rho>0$, define the augmented Lagrangian function for \eqref{formul:1}, ${\cal L}_{\rho}(x,\lambda;\theta)$, such that $\mbox{dom}\,{\cal L}_\rho=X\times\R^m\times\Theta$ and
{
$$ {\cal L}_{\rho}(x,\lambda;\theta)\triangleq \min_{z\in \R_+^m}
\left[f(x;\theta)+\lambda^\top (h(x;\theta)+z)+\tfrac{\rho}{2}\|h(x;\theta)+z\|^2\right].$$
}%
Through a rearrangement of terms, it can be shown that
\begin{align}
{\cal L}_\rho(x,\lambda;\theta) & =f(x;\theta)+\tfrac{\rho}{2}\min_{z\in \R_+^m} \Big\|h(x;\theta)+z+{\lambda \over
		\rho}\Big\|^2-{\|\lambda\|^2\over 2\rho}\notag\\
&=f(x;\theta)+\tfrac{\rho}{2} d^2_{\mathbb{R}^m_-}\left(\tfrac{\lambda}{\rho}+h(x;\theta)\right) -{\|\lambda\|^2\over 2\rho},\label{Aug_L_formu}	
\end{align}
where $d_{\mathcal{X}}(x)\triangleq\min_{s\in \mathcal{X}} \|x-s\|$, and $d^2_{\mathcal{X}}(x)\triangleq\left(d_{\mathcal{X}}(x)\right)^2$.
 For $\rho=0$, let ${\cal L}_0(x,\lambda;\theta)$ denote the Lagrangian function: %defined as
\begin{align*}
 {\cal L}_0(x,\lambda;\theta)\triangleq \begin{cases}
 f(x;\theta)+\lambda^\top h(x;\theta), & \mbox{ if } \lambda\in \R_+^m \\
		  							  -\infty, & \mbox{otherwise.}
						\end{cases} 	
\end{align*}
When $\rho\geq 0$,
the dual problem %corresponding to
of ${\cal C}(\theta)$
is defined as %follows:
\begin{align*}
\tag{$D_{\rho}$}
\max_{\lambda \in \R_+^m}\quad \left\{g_\rho(\lambda;\theta)\triangleq\inf_{x\in X} {\cal L}_\rho(x,\lambda;\theta)\right\}.
\end{align*}
%where %$g_{\rho}$ is defined as follows:%
%$g_\rho(\lambda;\theta)\triangleq\inf_{x\in X} {\cal L}_\rho(x,\lambda;\theta).$
%The following results
{Throughout, we assume the following:{
\begin{assumption}\label{Assump: Aug_1}
\begin{enumerate}
\item[]
\item[(i)] The functions %\sout{$p$, $q$}
$f(x,\theta)$ and $h_i(x,\theta)$ are convex in $x\in X$ for all $\theta
	\in \Theta$ for $i = 1, \hdots, m$ and $X \subseteq\Real^n$ and $\Theta$ are convex compact sets.
%
%\item[(ii)] \sout{$\nabla_x p(x;\theta)$ is uniformly Lipschitz continuous in $x$ for all $\theta \in \Theta$ with Lipschitz constant $L_p$.}
%\item[(ii)] The set %\sout{${\cal D}_q$}
%$X$ is compact and convex.
%\item[(iii)] The Jacobian of $h$ w.r.t. $\theta$, denoted by
%$\J_{\theta} h(x;\theta)\in\R^{m\times d}$, exists and is uniformly
%bounded for all $(x,\theta)\in X\times\Theta$; %\sout{in $\theta$ for all
	%\ha{$x\in X\cap {\cal D}_q$ {\bf should be removed}};
  %   i.e, %\sout{there exists a constant $L_{h,\theta}$}
    %$\exists M_{h}>0$ such that $\|\J_{\theta} h(x;\theta)\|\leq M_{h}$ all $(x,\theta)\in X\times\Theta$.
%\sout{for all  $x\in X\cap {\cal D}_q$ and all $\theta\in \Theta$.}
%\item[(iv)] For $i=1,\hdots,m$, the functions $h_i(x;\theta)$ are
	%Lipschitz continuous in $\theta$ over $\Theta$ for all $x\in X$
	%{with Lipschitz constant $L^i_h$.}
\item[(ii)] The function $f(x;\theta)$ is Lipschitz continuous in $\theta$ over $\Theta$ for all
	$x\in X$ {with constant $L_f$}; i.e. {for all $x \in X$}, $\|f(x;\theta_1)-f(x;\theta_2)\|\leq L_{f}\|\theta_1-\theta_2\|$ for all $\theta_1,\theta_2\in \Theta$.
\item[(iii)] {$h(x;\theta)$ is an affine map
in $x$ for every $\theta \in \Theta$, i.e.,
	$h(x;\theta)=A(\theta)x+b(\theta)$ for some $A(\theta)\in\R^{m\times
		n}$ and $b(\theta)\in\R^m$. Suppose $A(\theta)$ and $b(\theta)$
		are Lipschitz continuous in $\theta$. Hence, $h(x;\theta)$ is
		Lipschitz continuous in $\theta$ with constant $L_h$
			uniformly for all $x\in X$. Clearly, $h(B(0,1);\theta)
				\subseteq B(b(\theta), \sigma_{\max}(A(\theta)))$ for
					all $\theta \in \Theta$, where $B(\bar{y},r):=\{y:\
					\|y-\bar{y}\|\leq r\}$. Hence, there is a constant
					$\sigma_h$ s.t. $h(B(0,1);\theta) \subseteq
					\sigma_h B(0, 1)$ for all $\theta \in \Theta$, since $\sigma_{\max}(A(\theta))$ is continuous in $\theta$ and $\Theta$ is compact.}
\item[(iv)] $X^*(\lambda;\theta)$ is pseudo-Lipschitz
%\us{HA-NEED TO DEFINE}
in $\theta$ uniformly
in $\lambda$ with constant $\kappa_X$, where $ X^*(\lambda;\theta) = \mbox{arg}\min_{x \in \cal X} {\cal
	L}_0(x,\lambda;\theta), $
	i.e., for any $\theta_1,\theta_2\in\Theta$,
	$X^*(\lambda;\theta_1)\subseteq X^*(\lambda;\theta_2)+\kappa_X B(0,1)$ for all $\lambda\geq 0$.
\end{enumerate}
\end{assumption}
}

%\noindent {\bf Remark}: Under Assumption~\ref{Assump: Aug_1}(iv), it
%can be shown
%that if $L_{h}\triangleq {\displaystyle \max_{i \in \{1,\hdots,m\}}}
%\{L^i_{h}\}$, then for all $x\in X$ and $\theta_1,\theta_2\in \Theta$,
%$\|h(x;\theta_1)-h(x;\theta_2)\|\leq L_{h}\|\theta_1-\theta_2\|.$
%Furthermore, under Assumption~\ref{Assump: Aug_1}(vi),	
Rather than focusing on the nature of the algorithm employed for resolving the
learning problem, instead we impose a requirement that the adopted scheme
produces a sequence that converges to the optimal solution at a non-asymptotic
linear rate.
\begin{assumption}
\label{Assump: learn_lin_rate}
There exists a learning scheme that generates a sequence $\{\theta_k\}$
such that $\theta_k \to \theta^*$ at a linear rate as $k\to \infty$, i.e., there
exists a constant $q_\ell\in(0,1)$ such that for all $k \geq 0$ and $\theta_0
\in \Theta$, one has $\|\theta_k-\theta^*\|\leq q_\ell^k\|\theta_0-\theta^*\|$.
In addition, at iteration $k$ of the optimization problem ${\cal C}$, only $\theta_1,\dots,\theta_k$ are revealed.
\end{assumption}
}
Lemma~\ref{Lemma: Aug_Rock_Lemmas}, pertaining to various
properties of the gradient of the dual function
$\nabla_{\lambda}g_\rho$, will be used in our analysis. The proof of Lemma~\ref{Lemma: Aug_Rock_Lemmas} can be
found in~\cite{rockafellar1973dual} and is omitted here.
\begin{lemma}\label{Lemma: Aug_Rock_Lemmas}
Suppose Assumption~\ref{Assump: Aug_1} holds. %Then, the following hold.
\begin{enumerate}
\item[(i)] For any $\rho>0$ and $\theta \in \Theta$, the dual function $g_\rho (\lambda;\theta)$ is {\it everywhere} finite, continuously differentiable concave function over $\R^m$; more precisely, $g_\rho(\lambda;\theta)=\max_{w\in\R^m}\{g_0(w;\theta)-\tfrac{1}{2\rho}\|w-\lambda\|^2\}$, i.e., $g_\rho(\cdot,\theta)$ is the Moreau regularization of $g_0(\cdot,\theta)$ for all $\theta\in\Theta$. Therefore,
	$\nabla_\lambda g_\rho(\lambda;\theta)$ is Lipschitz continuous in
	$\lambda$ with constant $\tfrac{1}{\rho}$.
\item[(ii)] For any given $\lambda\in\R^m_+$ and $\theta\in \Theta$, $\nabla_\lambda g_\rho$ can be computed as $\nabla_\lambda g_\rho (\lambda;\theta)=\nabla_{\lambda} {\cal L}_\rho (x^*(\lambda),\lambda;\theta)$, where $x^*(\lambda)\in \argmin_{x\in X} {\cal L}_\rho
(x,\lambda;\theta)$.
\item[(iii)] Given $\lambda\in\R^m_+$ and $\theta\in \Theta$,
	suppose $\tilde{x}(\lambda)$ is an {\it inexact} solution to $\min_{x\in X}{\cal L}_\rho(x,\lambda;\theta)$
	with accuracy $\alpha$, i.e.,  $\tilde{x}(\lambda)\in X$ satisfies
${\cal L}_\rho(\tilde{x}(\lambda),\lambda;\theta)\leq g_\rho(\lambda;\theta)+\alpha,$
then \vspace*{-3mm}
$$\|\nabla_{\lambda} {\cal L}_\rho(\tilde{x}(\lambda),\lambda;\theta)-\nabla_{\lambda} g_\rho(\lambda;\theta)\|^2\leq 2\alpha/\rho.$$
\end{enumerate}
\end{lemma}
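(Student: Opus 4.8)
The statement collects three standard facts about the augmented Lagrangian dual, and I would establish them in the order (i) $\Rightarrow$ (ii) $\Rightarrow$ (iii), since the latter two rest on the smoothness proved in (i). For part (i) the plan is to identify $g_\rho(\cdot;\theta)$ as the Moreau--Yosida regularization of the ordinary dual $g_0(\cdot;\theta)$ and then invoke the textbook properties of that regularization. Introduce the primal value function $p(u;\theta)\triangleq\inf\{f(x;\theta):x\in X,\ h(x;\theta)\le u\}$, which is convex and nonincreasing in $u$ and proper under the standing assumptions (finite optimal value, no duality gap, $X,\Theta$ compact). Reordering the minimizations over $x$ and $z$ in the definitions gives $g_0(\lambda;\theta)=\inf_u\{p(u;\theta)+\lambda^\top u\}$ for $\lambda\in\R^m_+$ (and $-\infty$ otherwise, since $p$ is nonincreasing), and, writing $v=h(x;\theta)+z$, $g_\rho(\lambda;\theta)=\inf_u\{p(u;\theta)+\lambda^\top u+\tfrac{\rho}{2}\|u\|^2\}$. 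Dualizing the quadratic through $\lambda^\top u+\tfrac\rho2\|u\|^2=\max_{w}\{w^\top u-\tfrac1{2\rho}\|w-\lambda\|^2\}$ and interchanging $\inf_u$ with $\max_w$ rewrites the second expression as $\max_{w\in\R^m}\{g_0(w;\theta)-\tfrac1{2\rho}\|w-\lambda\|^2\}$, i.e. the Moreau envelope of the closed proper concave function $g_0(\cdot;\theta)$. Everywhere-finiteness, concavity, continuous differentiability, and the $\tfrac1\rho$-Lipschitz gradient then follow because the quadratic dominates, so the $\max$ is attained at a unique $w^*(\lambda)$ and $\nabla_\lambda g_\rho(\lambda;\theta)=\tfrac1\rho\big(w^*(\lambda)-\lambda\big)$.

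For part (ii), start from $g_\rho(\lambda;\theta)=\inf_{x\in X}\mathcal{L}_\rho(x,\lambda;\theta)$ and observe that for each fixed $x$ the map $\lambda\mapsto\mathcal{L}_\rho(x,\lambda;\theta)$ is concave and $C^1$: the inner minimization over $z$ has the unique solution $z^*=[-h(x;\theta)-\lambda/\rho]_+$, so Danskin's theorem gives $\nabla_\lambda\mathcal{L}_\rho(x,\lambda;\theta)=h(x;\theta)+z^*$, which is continuous (indeed $\tfrac1\rho$-Lipschitz) in $\lambda$. Since $X$ is compact and $\mathcal{L}_\rho$ is jointly continuous, $X^*(\lambda)\triangleq\argmin_{x\in X}\mathcal{L}_\rho(x,\lambda;\theta)\neq\emptyset$, and the min-version of Danskin's theorem yields $\partial_\lambda g_\rho(\lambda;\theta)=\mathrm{conv}\{\nabla_\lambda\mathcal{L}_\rho(x^*,\lambda;\theta):x^*\in X^*(\lambda)\}$. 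By part (i) this superdifferential is the singleton $\{\nabla_\lambda g_\rho(\lambda;\theta)\}$, so $\nabla_\lambda\mathcal{L}_\rho(x^*,\lambda;\theta)=\nabla_\lambda g_\rho(\lambda;\theta)$ for every $x^*\in X^*(\lambda)$, which is the claim.

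Part (iii) is the quickest step, via a descent-lemma sandwich. Fix $\lambda\in\R^m_+$, $\theta\in\Theta$, and let $\tilde x=\tilde x(\lambda)$ satisfy $\mathcal{L}_\rho(\tilde x,\lambda;\theta)\le g_\rho(\lambda;\theta)+\alpha$. For arbitrary $\mu\in\R^m$, using in turn the definition of $g_\rho$ as an infimum over $x\in X$, concavity of $\mathcal{L}_\rho(\tilde x,\cdot;\theta)$ in $\lambda$ (the same Moreau-envelope computation as in (i)), and the inexactness bound,
\begin{align*}
g_\rho(\mu;\theta)&\le\mathcal{L}_\rho(\tilde x,\mu;\theta)\le\mathcal{L}_\rho(\tilde x,\lambda;\theta)+\langle\nabla_\lambda\mathcal{L}_\rho(\tilde x,\lambda;\theta),\,\mu-\lambda\rangle\\
&\le g_\rho(\lambda;\theta)+\alpha+\langle\nabla_\lambda\mathcal{L}_\rho(\tilde x,\lambda;\theta),\,\mu-\lambda\rangle.
\end{align*}
On the other hand, $g_\rho(\cdot;\theta)$ is concave with $\tfrac1\rho$-Lipschitz gradient (part (i)), hence $g_\rho(\mu;\theta)\ge g_\rho(\lambda;\theta)+\langle\nabla_\lambda g_\rho(\lambda;\theta),\,\mu-\lambda\rangle-\tfrac1{2\rho}\|\mu-\lambda\|^2$. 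Subtracting, with $d\triangleq\nabla_\lambda g_\rho(\lambda;\theta)-\nabla_\lambda\mathcal{L}_\rho(\tilde x,\lambda;\theta)$, gives $\langle d,\mu-\lambda\rangle\le\alpha+\tfrac1{2\rho}\|\mu-\lambda\|^2$ for all $\mu\in\R^m$; the choice $\mu=\lambda+\rho d$ then yields $\rho\|d\|^2\le\alpha+\tfrac\rho2\|d\|^2$, i.e. $\|d\|^2\le2\alpha/\rho$. The main obstacle in the whole argument is the rigorous justification of the $\inf_u$/$\max_w$ interchange in part (i), because the $z$-variable (hence the dual of the quadratic) ranges over the unbounded cone $\R^m_+$; here I would appeal to Sion's minimax theorem together with coercivity of the integrand in $w$ and the fact that only $w\ge 0$ contributes, or simply defer to Rockafellar's proof. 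Everything else is direct computation plus standard appeals to Danskin's theorem and to properties of Moreau envelopes.
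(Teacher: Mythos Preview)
The paper does not give its own proof of this lemma; it simply states that ``the proof can be found in~\cite{rockafellar1973dual} and is omitted here.'' Your proposal is correct and in fact reconstructs the essential ingredients of Rockafellar's original argument: identifying $g_\rho$ with the Moreau envelope of $g_0$ via the primal perturbation function $p(u;\theta)$, reading off differentiability and the $1/\rho$-Lipschitz gradient from standard Moreau--Yosida theory, and obtaining (ii) from Danskin's theorem together with the singleton superdifferential forced by (i). Your self-contained sandwich argument for (iii) is clean and is likewise equivalent to what one extracts from Rockafellar's proximal-point viewpoint. The only genuine technical debt you flag---justifying the $\inf_u/\max_w$ interchange in (i)---is exactly the point Rockafellar handles by Fenchel duality, and your suggested route via Sion plus coercivity (or simply deferring to \cite{rockafellar1973dual}) is appropriate. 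One minor remark: in (iii) the concavity of $\lambda\mapsto\mathcal{L}_\rho(\tilde x,\lambda;\theta)$ follows most directly from its representation as a minimum over $z\ge 0$ of affine functions of $\lambda$, rather than from a Moreau-envelope computation; the conclusion is of course the same.
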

Next, we examine %the Lipschitzian property of
the continuity of $\nabla_{\lambda}
g_{\rho}(\lambda;\theta)$ in $\theta\in\Theta$.

\begin{lemma}[\textbf{Lipschitz continuity of $\nabla_{\lambda} g_\rho$ in $\theta\in\Theta$}]
\label{Lemma:Aug_Lips_cont_grad_g}
Suppose Assumption~\ref{Assump: Aug_1} holds. Then, we have that $\nabla_{\lambda}
g_\rho(\lambda;\theta)$ is Lipschitz continuous in $\theta$ over $\Theta$ uniformly in $\lambda\in\R^m$ with constant $\kappa_h+\kappa_X \sigma_{h}$.
\end{lemma}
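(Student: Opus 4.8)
The plan is to convert the $\theta$-dependence of $\nabla_\lambda g_\rho(\lambda;\theta)$ into $\theta$-dependence of the constraint map $h(\cdot;\theta)$ evaluated at an associated primal minimizer, and then to split that off into two pieces: one governed by the Lipschitz continuity of $h$ in $\theta$ (Assumption~\ref{Assump: Aug_1}(iii)), the other by the pseudo-Lipschitz property of the primal solution map (Assumption~\ref{Assump: Aug_1}(iv)). The key technical lever is that $[\,\cdot\,]_+=\Pi_{\R^m_+}$ is nonexpansive, which makes the reduction lossless and, conveniently, independent of $\lambda$.

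First I would write the $\lambda$-gradient of the augmented Lagrangian explicitly: differentiating \eqref{Aug_L_formu} in $\lambda$ and using $\nabla_u d^2_{\R^m_-}(u)=2\big(u-\Pi_{\R^m_-}(u)\big)=2[u]_+$ gives, for every fixed $x\in X$,
$$\nabla_\lambda\mathcal{L}_\rho(x,\lambda;\theta)=\Big[\tfrac{\lambda}{\rho}+h(x;\theta)\Big]_+-\tfrac{\lambda}{\rho}.$$
Now fix $\lambda\in\R^m$ and $\theta_1,\theta_2\in\Theta$, and pick $x_i^*\in\argmin_{x\in X}\mathcal{L}_\rho(x,\lambda;\theta_i)$ (a minimizer exists since $X$ is compact). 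By Lemma~\ref{Lemma: Aug_Rock_Lemmas}(i)--(ii), $\nabla_\lambda g_\rho(\lambda;\theta_i)=\big[\tfrac{\lambda}{\rho}+h(x_i^*;\theta_i)\big]_+-\tfrac{\lambda}{\rho}$. Subtracting the two identities, the $\tfrac{\lambda}{\rho}$ terms cancel and nonexpansiveness of $[\,\cdot\,]_+$ yields
$$\|\nabla_\lambda g_\rho(\lambda;\theta_1)-\nabla_\lambda g_\rho(\lambda;\theta_2)\|\le\|h(x_1^*;\theta_1)-h(x_2^*;\theta_2)\|.$$

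Next I would insert $h(x_2^*;\theta_1)$ and use the triangle inequality. The term $\|h(x_2^*;\theta_1)-h(x_2^*;\theta_2)\|\le\kappa_h\|\theta_1-\theta_2\|$ by Assumption~\ref{Assump: Aug_1}(iii) (Lipschitz in $\theta$, uniformly over $x\in X$). For $\|h(x_1^*;\theta_1)-h(x_2^*;\theta_1)\|$, affineness $h(\cdot;\theta_1)=A(\theta_1)(\cdot)+b(\theta_1)$ gives $\|A(\theta_1)(x_1^*-x_2^*)\|\le\sigma_{\max}(A(\theta_1))\|x_1^*-x_2^*\|\le\sigma_h\|x_1^*-x_2^*\|$, where $\sigma_h:=\sup_{\theta\in\Theta}\sigma_{\max}(A(\theta))<\infty$ by compactness of $\Theta$. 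Finally Assumption~\ref{Assump: Aug_1}(iv) bounds $\|x_1^*-x_2^*\|\le\kappa_X\|\theta_1-\theta_2\|$; adding the two contributions gives $(\kappa_h+\kappa_X\sigma_h)\|\theta_1-\theta_2\|$, and since none of these bounds involves $\lambda$, the estimate is uniform in $\lambda$, as claimed.

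The main obstacle is the very last step. The points $x_i^*$ furnished by Lemma~\ref{Lemma: Aug_Rock_Lemmas}(ii) minimize the \emph{augmented} Lagrangian $\mathcal{L}_\rho(\cdot,\lambda;\theta_i)$, whereas Assumption~\ref{Assump: Aug_1}(iv) is phrased for minimizers of $\mathcal{L}_0$. The bridge is the classical fact that any minimizer of $\mathcal{L}_\rho(\cdot,\lambda;\theta)$ over $X$ is also a minimizer of $\mathcal{L}_0(\cdot,\widehat\lambda;\theta)$ over $X$ for the updated multiplier $\widehat\lambda=\big[\lambda+\rho\,h(x^*;\theta)\big]_+\ge 0$; one then invokes Assumption~\ref{Assump: Aug_1}(iv) --- assumed \emph{uniformly in} $\lambda\ge 0$ --- at these updated multipliers, and is free to select the particular $x_2^*\in\argmin_{x\in X}\mathcal{L}_\rho(x,\lambda;\theta_2)$ that realizes the pseudo-Lipschitz inclusion. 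Implicit (and consistent with the pseudo-Lipschitz terminology) is reading Assumption~\ref{Assump: Aug_1}(iv) in the scaled form $X^*(\lambda;\theta_1)\subseteq X^*(\lambda;\theta_2)+\kappa_X\|\theta_1-\theta_2\|B(0,1)$; carefully tracking the dependence of $\widehat\lambda$ on $\theta$ as one passes from $\theta_1$ to $\theta_2$ is the only delicate bookkeeping, everything else being the routine chain above.
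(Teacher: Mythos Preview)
The paper does not actually prove this lemma in the present text; it refers the reader to an extended version (Proposition~2.4 in~\cite{1510.00490}) and omits the argument entirely. So a direct comparison of approaches is impossible, and I will assess your argument on its own.

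Your reduction is correct up to and including
\[
\|\nabla_\lambda g_\rho(\lambda;\theta_1)-\nabla_\lambda g_\rho(\lambda;\theta_2)\|
\le\|h(x_1^*;\theta_1)-h(x_2^*;\theta_2)\|,
\]
and so is the bridge you invoke: any $x^*\in\argmin_{x\in X}\mathcal L_\rho(x,\lambda;\theta)$ indeed lies in $X^*(\widehat\lambda;\theta)$ with $\widehat\lambda=\pi_\rho(\lambda;\theta)=[\lambda+\rho h(x^*;\theta)]_+$. The difficulty you flag at the end, however, is more than bookkeeping. Assumption~\ref{Assump: Aug_1}(iv) compares $X^*(\mu;\theta_1)$ and $X^*(\mu;\theta_2)$ at a \emph{common} multiplier $\mu\ge 0$, whereas your two primal points sit in $X^*(\widehat\lambda_1;\theta_1)$ and $X^*(\widehat\lambda_2;\theta_2)$ with $\widehat\lambda_i=\pi_\rho(\lambda;\theta_i)$ generally distinct. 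Applying (iv) at $\mu=\widehat\lambda_1$ yields a point $\tilde x_2\in X^*(\widehat\lambda_1;\theta_2)$ close to $x_1^*$, but $\tilde x_2$ need not belong to $\argmin_{x\in X}\mathcal L_\rho(x,\lambda;\theta_2)$ (that set is contained in $X^*(\widehat\lambda_2;\theta_2)$, at a different multiplier, and the containment $\argmin_X\mathcal L_\rho\subseteq X^*(\widehat\lambda;\theta)$ is in general strict). Hence you cannot evaluate $\nabla_\lambda g_\rho(\lambda;\theta_2)$ via $\tilde x_2$ and Lemma~\ref{Lemma: Aug_Rock_Lemmas}(ii). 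Attempting to absorb the mismatch is circular: $\|\widehat\lambda_1-\widehat\lambda_2\|=\rho\,\|\nabla_\lambda g_\rho(\lambda;\theta_1)-\nabla_\lambda g_\rho(\lambda;\theta_2)\|$ is exactly the quantity you are bounding, and Assumption~\ref{Assump: Aug_1}(iv) gives no Lipschitz control of $X^*(\cdot;\theta)$ in $\lambda$. As written, then, the chain does not close to the claimed constant $\kappa_h+\kappa_X\sigma_h$; the step labeled ``delicate bookkeeping'' is where the missing idea lives.
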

\begin{proof}
Due to limited space, we omit the proof. For details, see Proposition 2.4 in the extended version this paper~\cite{1510.00490}.
\end{proof}
\noindent {\bf Remark:} We now comment on the conditions under which $X^*(\lambda;\theta)$
is pseudo-Lipschitz in $\theta$. When $f(x;\theta)$ is a \emph{differentiable}
convex function in $x$ for every $\theta$, and $h(x;\theta)$ is an
affine function in $x$ for every $\theta$, then $X^*(\lambda;\theta)$ is
the solution set of VI$(X, \nabla_{x} {\cal
		L}_0(.,\lambda;\theta))$ when $\lambda \in \Real^m_+$ and
$\theta \in \Theta$. We consider two sets of problem classes in
providing conditions under which the associated solution sets admit
pseudo-Lipschitzian properties: {\em 1) Parametrized quadratic
	programming:} If $f(x;\theta)$ is a quadratic function for every
	$\theta \in \Theta$ and $X$ is a polyhedral set, the mapping of the
	variational inequality problem is affine; such a problem  is
	generally referred to as an affine variational inequality problem
	and denoted by AVI$(X,M(\theta),q(\theta))$
where $M(\theta) x + q(\theta) = \nabla_x f(x;\theta) + %\sum_{i=1}^m \lambda_i \nabla_x h_i(x;\theta)
A(\theta)^\top \lambda$ while its solution set is denoted by
SOL$(X,M(\theta),q(\theta))$, $\mbox{int}(K^+)$ denotes the
	interior of the positive dual cone of $K$, and $K^+ \triangleq \{y:
		y^Tz \geq 0, \forall z \in K\}$. Then under Theorem 7.4~\cite{lee2010quadratic}, if
$M(\theta)$ is positive semidefinite over $X$ for all $\theta \in
\Theta$, and if $q(\theta) \in \mbox{int} \left( \left[\mbox{SOL} (X, M(\theta),
			0)\right]^+\right)$,
%\ser{COMMENT: this condition is not clear? What do you mean by $\left[\mbox{SOL} (X, M(\theta), 0)\right]^+$?}
then there exists scalars $\epsilon$
and $\kappa$
%\ser{do they depend on the given $\theta$?}
 such that if $ \max\limits_{\thetahat \in \Theta} \left\{ \|M(\thetahat) - M(\theta)\|, \|
q(\thetahat) - q(\theta) \|\right\}< \epsilon, $
then $X^*(\lambda;\thetahat) \subseteq X(\lambda;\theta) + \kappa
\left(\|M(\thetahat)-M(\theta)\| + \|q(\thetahat)-q(\theta)\|\right) B(0,1).$
Under a compactness assumption on $\Theta$, this ``local'' Lipschitzian result can be globalized.
{\em 2) Parametrized convex programming:} More generally, suppose  $f(x;\theta)$ is a nonlinear convex function and $B(H;\epsilon,S)$ denotes an $\epsilon-$neighborhood of $H$ containing all continuous functions $G$ that are within $\epsilon$ distance to $H$ when restricted to the set $S$, i.e.,
$$\| G-H\|_S \triangleq \sup_{y \in S} \| G(y) - H(y)\| < \epsilon. $$Then
 we define the associated  VI$(X,\nabla_{x} {\cal L}(.,\lambda;\theta))$  as
{\em semi-stable} if there exist two positive scalars $c$ and $\epsilon$
%\ser{depending on $\theta$?}
such that for every $\nabla_{x} {\cal L}(.,\lambda;\thetahat) \in B(\nabla_{x}{\cal
L}(.,\lambda;\theta);\epsilon, X)$, we have that
\begin{align*}
 &X^*(\lambda;\thetahat) \subseteq X^*(\lambda;\theta) \\&+ c \sup_{x \in
X} \| \nabla_{x} {\cal L}(x,\lambda;\thetahat) - \nabla_{x} {\cal
				 L}(x,\lambda;\theta)\| B(0,1).
\end{align*}
In fact, a necessary and sufficient condition for semi-stabililty of VI$(X,F)$ is the following~\cite[Prop.~5.5.5]{facchinei02finite}: There exists two positive scalars $c$ and $\epsilon$, such that for all $q \in \Real^n$,
$$ \ \|q \| \ < \ \epsilon \ \implies \mbox{SOL}(X,q+F) \subseteq \mbox{SOL}(X,F) + B(0,c\|q\|).$$

We conclude this section by presenting the misspecified variant of the inexact augmented Lagrangian scheme with constant penalty $\rho>0$. Notably, if $\theta_k = \theta^*$ for all $k\geq 0$, this reduces to the traditional version considered in~\cite{rockafellar1973dual}. %which is correctly specified.
		\vspace{-0.05in}
\begin{algorithm}
\small
\caption{{Misspecified inexact aug. Lag. scheme}}
Given $\lambda_0=\mathbf{0}\in\R^m$, and $\rho>0$, let $\{\alpha_k\}$,$\{\theta_k\}$ be given sequences. Then for all $k\geq 0$,
\label{Alg: Aug_const_rho}
\begin{enumerate}
\item[(1)]  find $x_k$ such that ${\cal L}_\rho(x_k,\lambda_k;\theta_{k})\leq g_\rho(\lambda_k;\theta_{k})+\alpha_k$;
\item [(2)] $\lambda_{k+1}=\lambda_k+\rho\nabla_{\lambda} {\cal L}_\rho(x_k,\lambda_k,\theta_{k})$;
\item [(3)] $k:=k+1$;
\end{enumerate}
\end{algorithm}
		\vspace{-0.1in}
%We make the following assumption on the sequence $\{\alpha_k\}$.
\begin{assumption}
\label{Assump:Aug_2} $\{\alpha_k\}$ is chosen such that
{$\sum_{k=0}^{\infty} \sqrt{\alpha_k} <\infty$}.
\end{assumption}
\vspace*{2mm}
Under this assumption, we show \emph{(i)}
	$f^*-g_\rho(\bar{\lambda}_k)\leq\mathcal{O}(1/k)$ for
		$\bar{\lambda}_k\triangleq {1\over k}{\sum_{i=1}^k \lambda_i}$,
		\emph{(ii)} $d_{\R^m_-}(h(\bar{x}_k;\theta^*))\leq
			\mathcal{O}(1/\sqrt{k})$, and \emph{(iii)}
	$-\mathcal{O}(1/\sqrt{k})\leq f(\bar{x}_k;\theta^*)-f^*\leq
		\mathcal{O}(1/k)$ for $\bar{x}_k\triangleq {1\over
			k+1}{\sum_{i=0}^k x_i}$. After proving these bounds
			independently, we became aware of related recent
			work~\cite{1506.05320 }, where
    %the perfectly specified variant of
			Algorithm~\ref{Alg: Aug_const_rho} is considered with $\alpha_k=\alpha>0$ for all $k\geq 0$,
			assuming \emph{perfect information}, i.e., $\theta_k=\theta^*$ for all $k\geq 0$.
			In~\cite{1506.05320 }, it is shown that \emph{(i)}
	$f^*-g_\rho(\bar{\lambda}_k)\leq\mathcal{O}(1/k)+\alpha$,
		\emph{(ii)} $d_{\R^m_-}(h(\bar{x}_k;\theta^*))\leq
			\mathcal{O}(1/\sqrt{k})$, and \emph{(iii)}
	$-\mathcal{O}(1/\sqrt{k})\leq f(\bar{x}_k;\theta^*)-f^*\leq
		\mathcal{O}(1/k)+\alpha$. Therefore, according
		to~\cite{1506.05320 },  $\alpha$ should be fixed as a small
		constant in accordance with the desired accuracy. %as it appears in \emph{both} primal and dual suboptimality bounds.
		Since $\alpha$ is fixed in~\cite{1506.05320}, such avenues
			can, at best, provide approximate solutions.
	%suboptimality of the iterate sequence may \emph{stall} after certain iterations.
	In contrast, our
		method may  start with large $\alpha_0$ and gradually
		decrease it, ensuring \emph{both} numerical stability
		and asymptotic convergence to optimality.
%in contrast with~\cite{1506.05320} where the scheme provides approximate solutions at best.
		\vspace{-0.05in}
%-----------------------------------------------
\section{Rate of convergence analysis}\label{Sec:III}
%-----------------------------------------------
		\vspace{-0.01in}
We begin by showing that dual variables stay bounded by using
a supporting Lemma whose proof follows from
Lemma~\ref{Lemma: Aug_Rock_Lemmas}(i) and the properties of proximal
maps (cf.~\cite{hiriart2001convex}). %\red{in~\cite{Rockafellar73_1J}}.
\begin{lemma}
\label{Lemma:Non_exp_pi}
Let $\pi_\rho(\lambda;\theta):=\argmax_{w\in\R^m}g_0(w;\theta)-\tfrac{1}{2\rho}\|w-\lambda\|^2$ for $\theta\in\Theta$, i.e., the proximal map of $g_0(\cdot;\theta)$. Then $\pi_\rho(\lambda;\theta)=\lambda+\rho \nabla_\lambda g_\rho(\lambda;\theta)$, and $\pi_\rho$ is {\it nonexpansive} in $\lambda$ for all $\theta\in \Theta$.
\end{lemma}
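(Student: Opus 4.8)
The plan is to recognize $\pi_\rho(\cdot;\theta)$ as the proximal map of the closed proper convex function $-g_0(\cdot;\theta)$ and then lean on the classical Moreau-envelope calculus; Lemma~\ref{Lemma: Aug_Rock_Lemmas}(i) already supplies exactly the regularization identity $g_\rho(\lambda;\theta)=\max_{w\in\R^m}\{g_0(w;\theta)-\tfrac{1}{2\rho}\|w-\lambda\|^2\}$ that makes this translation work. First I would record that for $w\in\R^m_+$ one has $g_0(w;\theta)=\inf_{x\in X}[f(x;\theta)+w^\top h(x;\theta)]$ and $g_0(w;\theta)=-\infty$ otherwise, so that $-g_0(\cdot;\theta)$ is a supremum of affine functions of $w$ plus the indicator of the closed convex cone $\R^m_+$ — hence a closed proper convex function (properness from the standing assumption that the dual has a solution with finite value $f^*$). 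In particular the inner objective $w\mapsto g_0(w;\theta)-\tfrac{1}{2\rho}\|w-\lambda\|^2$ is strongly concave, so $\pi_\rho(\lambda;\theta)$ is well defined and unique for every $\lambda\in\R^m$, $\theta\in\Theta$.

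For the identity $\pi_\rho(\lambda;\theta)=\lambda+\rho\,\nabla_\lambda g_\rho(\lambda;\theta)$, I would apply Danskin's theorem (the envelope theorem) to $g_\rho(\lambda;\theta)=\max_w\{g_0(w;\theta)-\tfrac{1}{2\rho}\|w-\lambda\|^2\}$: since the maximizer is unique, $g_\rho(\cdot;\theta)$ is differentiable and its gradient is the partial gradient of the inner objective in $\lambda$ evaluated at $w=\pi_\rho(\lambda;\theta)$, which is $\tfrac{1}{\rho}\big(\pi_\rho(\lambda;\theta)-\lambda\big)$; rearranging gives the claim. For nonexpansiveness, fix $\theta$, take $\lambda_1,\lambda_2\in\R^m$, set $w_i:=\pi_\rho(\lambda_i;\theta)$, and write the first-order optimality condition of each (strongly concave) problem as $\tfrac{1}{\rho}(\lambda_i-w_i)\in\partial(-g_0)(w_i;\theta)$. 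Monotonicity of the subdifferential of the convex function $-g_0(\cdot;\theta)$ then yields
$$\big\langle (\lambda_1-w_1)-(\lambda_2-w_2),\ w_1-w_2\big\rangle\ \ge\ 0,$$
i.e. $\langle \lambda_1-\lambda_2,\ w_1-w_2\rangle\ \ge\ \|w_1-w_2\|^2$, and Cauchy--Schwarz gives $\|w_1-w_2\|\le\|\lambda_1-\lambda_2\|$, which is the asserted nonexpansiveness (in fact firm nonexpansiveness).

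I do not expect a real obstacle here: the content is entirely standard proximal-point/Moreau-envelope theory (cf.~\cite{rockafellar1973dual,hiriart2001convex}). The only points that need a careful line are (a) checking that $g_0(\cdot;\theta)$ is proper, closed, and concave so that the prox/envelope machinery applies — handled by the inf-of-affine representation together with the no-duality-gap assumption — and (b) keeping the sign conventions straight when passing between ``$\argmax$ of a concave function minus a quadratic'' and ``$\mathrm{prox}$ of a convex function,'' which is purely bookkeeping. Everything else is the two displays above.
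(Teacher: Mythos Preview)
Your proposal is correct and is exactly the argument the paper is invoking: the paper does not spell out a proof but simply says the lemma ``follows from Lemma~\ref{Lemma: Aug_Rock_Lemmas}(i) and the properties of proximal maps (cf.~\cite{hiriart2001convex}),'' and what you have written is precisely that standard Moreau-envelope/proximal-map argument. One small remark: for properness of $-g_0(\cdot;\theta)$ at an arbitrary $\theta\in\Theta$ you do not need the no-duality-gap assumption (which is stated only at $\theta^*$); compactness of $X$ from Assumption~\ref{Assump: Aug_1}(i) already gives $g_0(w;\theta)>-\infty$ for every $w\in\R^m_+$.
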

\begin{theorem}[\textbf{Boundedness of $\{\lambda_k\}$}]
\label{Theorem: Aug_lambda_bnd_cont_rho}
Let Assumptions~\ref{Assump: Aug_1}--\ref{Assump:Aug_2} hold, and $\lambda^*$ be an arbitrary solution to the Lagrangian dual of ${\cal C}(\theta^*)$, i.e., $\lambda^*\in\argmax_\lambda g_0(\lambda;\theta^*)$. Then for all $k\geq 1$, $\|\lambda_k-\lambda^*\|\leq C_\lambda$, where $C_\lambda$ is defined as follows:
\begin{equation}
C_\lambda \triangleq\sqrt{2\rho}\sum_{i=0}^\infty\sqrt{\alpha_i}+\rho M_{h}{\|\theta_0-\theta^*\|\over 1-q}+\|\lambda^*\|.\label{bd-C-lambda}
\end{equation} %+\|\lambda^*\|.$$
\end{theorem}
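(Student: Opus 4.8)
The plan is to treat Step~(2) of Algorithm~\ref{Alg: Aug_const_rho} as an \emph{inexact, misspecified} version of the proximal iteration $\lambda\mapsto\pi_\rho(\lambda;\theta^*)$ for the \emph{correctly specified} dual, and then combine the nonexpansivity of $\pi_\rho(\cdot;\theta^*)$ (Lemma~\ref{Lemma:Non_exp_pi}) with the summability of the two sources of error present at each step: the inexact subproblem solve in Step~(1), and the gap $\theta_k-\theta^*$.

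First I would record two preliminary facts. (a) Using \eqref{Aug_L_formu}, a direct computation of $\nabla_\lambda{\cal L}_\rho$ shows that Step~(2) is equivalent to $\lambda_{k+1}=\max\{\lambda_k+\rho\, h(x_k;\theta_k),\,0\}$ (componentwise positive part); since $\lambda_0=\mathbf 0$, this yields by induction that $\lambda_k\in\R^m_+$ for all $k$, so that Lemma~\ref{Lemma: Aug_Rock_Lemmas}(iii) is applicable along the trajectory. (b) By Lemma~\ref{Lemma: Aug_Rock_Lemmas}(i), $g_\rho(\cdot;\theta^*)$ is the Moreau regularization of $g_0(\cdot;\theta^*)$ and hence shares its maximizers; since $g_0(\lambda;\theta^*)=-\infty$ for $\lambda\notin\R^m_+$, any $\lambda^*\in\argmax_\lambda g_0(\lambda;\theta^*)$ satisfies $\nabla_\lambda g_\rho(\lambda^*;\theta^*)=0$, and therefore, by the identity $\pi_\rho(\lambda;\theta)=\lambda+\rho\nabla_\lambda g_\rho(\lambda;\theta)$ of Lemma~\ref{Lemma:Non_exp_pi}, $\pi_\rho(\lambda^*;\theta^*)=\lambda^*$, i.e. $\lambda^*$ is a fixed point of the nonexpansive map $\pi_\rho(\cdot;\theta^*)$.

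Next I would decompose the per-step perturbation. Writing $\pi_\rho(\lambda_k;\theta^*)=\lambda_k+\rho\nabla_\lambda g_\rho(\lambda_k;\theta^*)$ and adding/subtracting $\rho\nabla_\lambda g_\rho(\lambda_k;\theta_k)$ in Step~(2) gives
\[
\lambda_{k+1}-\pi_\rho(\lambda_k;\theta^*)=\rho\big(\nabla_\lambda{\cal L}_\rho(x_k,\lambda_k;\theta_k)-\nabla_\lambda g_\rho(\lambda_k;\theta_k)\big)+\rho\big(\nabla_\lambda g_\rho(\lambda_k;\theta_k)-\nabla_\lambda g_\rho(\lambda_k;\theta^*)\big).
\]
By Lemma~\ref{Lemma: Aug_Rock_Lemmas}(iii) the first term has norm at most $\rho\sqrt{2\alpha_k/\rho}=\sqrt{2\rho\,\alpha_k}$; by Lemma~\ref{Lemma:Aug_Lips_cont_grad_g} (with constant $M_h\triangleq\kappa_h+\kappa_X\sigma_h$) together with Assumption~\ref{Assump: learn_lin_rate} (with $q\triangleq q_\ell$), the second term has norm at most $\rho M_h\|\theta_k-\theta^*\|\le\rho M_h\, q^{\,k}\|\theta_0-\theta^*\|$. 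The triangle inequality and nonexpansivity of $\pi_\rho(\cdot;\theta^*)$ about its fixed point $\lambda^*$ then yield the one-step recursion $\|\lambda_{k+1}-\lambda^*\|\le\|\lambda_k-\lambda^*\|+\sqrt{2\rho\,\alpha_k}+\rho M_h\, q^{\,k}\|\theta_0-\theta^*\|$.

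Finally I would telescope this inequality from $\lambda_0=\mathbf 0$ to get $\|\lambda_k-\lambda^*\|\le\|\lambda^*\|+\sqrt{2\rho}\sum_{i=0}^{k-1}\sqrt{\alpha_i}+\rho M_h\|\theta_0-\theta^*\|\sum_{i=0}^{k-1}q^{\,i}$, and bound the two partial sums by $\sum_{i=0}^\infty\sqrt{\alpha_i}<\infty$ (Assumption~\ref{Assump:Aug_2}) and $\sum_{i=0}^\infty q^{\,i}=1/(1-q)$, which is exactly $C_\lambda$ of \eqref{bd-C-lambda}. I expect the only delicate point to be fact (b) — identifying $\lambda^*$ as a fixed point of $\pi_\rho(\cdot;\theta^*)$, which hinges on the Moreau-envelope characterization in Lemma~\ref{Lemma: Aug_Rock_Lemmas}(i) and on $g_0(\cdot;\theta^*)$ being $-\infty$ outside $\R^m_+$; everything else is a routine ``perturbed nonexpansive iteration'' estimate, with the two perturbations made summable by Assumptions~\ref{Assump: learn_lin_rate} and~\ref{Assump:Aug_2}.
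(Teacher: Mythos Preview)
Your proposal is correct and follows essentially the same approach as the paper: both argue that $\lambda^*$ is a fixed point of the nonexpansive map $\pi_\rho(\cdot;\theta^*)$, decompose the per-step error into the inexactness term (bounded via Lemma~\ref{Lemma: Aug_Rock_Lemmas}(iii) by $\sqrt{2\rho\alpha_k}$) and the misspecification term (bounded via Lemma~\ref{Lemma:Aug_Lips_cont_grad_g} by $\rho M_h\|\theta_k-\theta^*\|$), obtain the one-step recursion, and telescope. The only cosmetic difference is that the paper splits at $\pi_\rho(\lambda_k;\theta_k)$ while you split at $\pi_\rho(\lambda_k;\theta^*)$; your added observation~(a) that $\lambda_{k+1}=\max\{\lambda_k+\rho h(x_k;\theta_k),0\}\in\R^m_+$ is a detail the paper leaves implicit but is indeed needed to invoke Lemma~\ref{Lemma: Aug_Rock_Lemmas}(iii).
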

\begin{proof}
We begin by deriving a bound on $\|\lambda_{k+1}-\pi_{\rho}(\lambda_k;\theta_k)\|$ by utilizing the definition of $\lambda_{k+1}$ from Step 2 in Algorithm~\ref{Alg: Aug_const_rho}:
\begin{align}
& \quad \ \|\lambda_{k+1}-\pi_\rho(\lambda_k;\theta_{k})\| \notag \\
&=\|\lambda_{k}+\rho\nabla_{\lambda} {\cal L}_\rho(x_k,\lambda_k;\theta_{k})-\lambda_k-\rho\nabla_\lambda g_{\rho}(\lambda_k;\theta_{k})\|\notag\\
&= \rho\|\nabla_{\lambda} {\cal L}_\rho(x_k,\lambda_k;\theta_{k})-\nabla_\lambda g_{\rho}(\lambda_k;\theta_{k})\|\leq \sqrt{2\rho\alpha_k},\label{proof:Aug_bnd_lambda_1}
\end{align}
where the last inequality follows from Lemma~\ref{Lemma: Aug_Rock_Lemmas} (iii). Since $g_\rho(\cdot;\theta^*)$ is the Moreau regularization of $g_0(\cdot;\theta^*)$, it is true that $\lambda^*\in \argmax_{\lambda} g_{\rho}(\lambda,\theta^*)$ for all $\rho>0$. Hence, $\nabla_\lambda g_\rho(\lambda^*;\theta^*)=0$ and $\lambda^*=\pi_{\rho}(\lambda^*,\theta^*)$. From this observation, we obtain the bound below:
%\begin{align}
\begin{align}
&\|\pi_\rho(\lambda_k,\theta_{k})-\lambda^*\|
 =  \|\pi_\rho(\lambda_k,\theta_k)-\pi_{\rho}(\lambda^*,\theta^*)\|\notag\\
&\leq \|\pi_\rho(\lambda_k,\theta_{k})-\pi_\rho(\lambda_k,\theta^*)\|+\|\pi_\rho(\lambda_k,\theta^*)-\pi_\rho(\lambda^*,\theta^*)\|
\notag\\
&= \rho\|\nabla_{\lambda} g_{\rho}(\lambda_k,\theta_{k})-\nabla_{\lambda} g_{\rho}(\lambda_k,\theta^*)\| \notag\\
& +\|\pi_\rho(\lambda_k,\theta^*)-\pi_\rho(\lambda^*,\theta^*)\notag \|
\\ & \leq \rho M_{h}\|\theta_{k}-\theta^*\|+\|\lambda_k-\lambda^*\|.\label{proof:Aug_bnd_lambda_2}
\end{align}
%\end{align}
%where the first inequality follows from the triangle
%inequality, the second equality is a result of invoking the definition of
%$\pi_{\rho}(\lambda, \theta)$,   and the last inequality
This follows from the Lipschitz continuity of $\nabla_{\lambda} g_{\rho}$ and the nonexpansivity of $\pi_\rho$ in
$\lambda$ (Lemma~\ref{Lemma:Non_exp_pi}). Hence, from
~\eqref{proof:Aug_bnd_lambda_1} and ~\eqref{proof:Aug_bnd_lambda_2}, we obtain for all $i\geq 0$ that
\begin{equation*}
\begin{array}{c}
\|\lambda_{i+1}-\lambda^*\| %&\leq\|\lambda_{k+1}-\pi_\rho(\lambda_k,\theta_{k})\|+\|\pi_\rho(\lambda_{k},\theta_k)-\lambda^*\|\\
\leq \sqrt{2\rho\alpha_i}+\rho M_{h}\|\theta_{i}-\theta^*\|+\|\lambda_i-\lambda^*\|.
\end{array}
\end{equation*}
For $k\geq 0$, by summing the above inequality over $i=0,\ldots,k$, and using the fact that $\lambda_0=\mathbf{0}$, we get
\begin{align*}
\|\lambda_{k+1}-\lambda^*\|&\leq \sum_{i=0}^k\left(\sqrt{2\rho\alpha_i}+\rho M_{h}\|\theta_{i}-\theta^*\|\right)+\|\lambda_0-\lambda^*\| \notag\\
&\leq \sqrt{2\rho}\sum_{i=0}^\infty\sqrt{\alpha_i}+\rho M_{h}{\|\theta_0-\theta^*\|\over 1-q}+\|\lambda^*\|.
\end{align*}
%Finally, for all $k$, we have that the following holds:
%\begin{align} \notag
%& \quad \|\lambda_{k+1}\|\leq \|\lambda_{k+1}-\lambda^*\|+\|\lambda^*\|\\
%		\notag
%&\leq \sqrt{2\rho}\sum_{i=0}^\infty\sqrt{\alpha_i}+\rho M_{h,\theta}{\|\theta_0-\theta^*\|\over 1-q_\ell}+\|\lambda_0-\lambda^*\|+\|\lambda^*\|\\
%&\triangleq C_\lambda.
%\end{align}
\end{proof}
\vspace{-.1 cm}
\noindent {\bf Remark}: It is worth emphasizing that the bound $C_{\lambda}$ can be tightened when $\theta^*$ is known, i.e., since $\theta_0 = \theta^*$, the second term disappears.

Next, we prove that the augmented Lagrangian scheme generates a sequence
$\{\lambda_k\}$ such that $\bar{\lambda}_k \to \lambda^*$ as $k \to \infty$ by
deriving a rate statement on the ergodic average sequence.
\begin{theorem}[\textbf{Bound on dual suboptimality}]
\label{Theorem:Aug_bnd_dual_sub}
Let Assumptions~\ref{Assump: Aug_1} -- \ref{Assump:Aug_2} hold and let $\{\lambda_k\}_{k\geq 1}$ denote the sequence generated by Algorithm~\ref{Alg: Aug_const_rho}. In addition, let $\bar{\lambda}_k\triangleq {1\over k}{\sum_{i=1}^k \lambda_i}$. Then it follows that for all $k\geq 1$:
\begin{align}
f^*-g_\rho(\bar{\lambda}_{k};\theta^*)=\sup_{\lambda} \ g_\rho(\lambda;\theta^*)-g_\rho(\bar{\lambda}_{k};\theta^*)&\leq{B_g\over k},\label{Bound_dual_cons_rho}
\end{align}
where $\lambda^*\in \argmax g_0(\lambda,\theta^*)$, $C_\lambda$ is
defined in Theorem~\ref{Theorem: Aug_lambda_bnd_cont_rho}, and $B_g$ is defined as follows:
$$B_{g}\triangleq \tfrac{1}{2\rho}\|\lambda^*\|^2+C_{\lambda}\left(\sqrt{\tfrac{2}{\rho}}\sum_{k=0}^{\infty} \sqrt{\alpha_k}+\frac{M_{h}\|\theta_0-\theta^*\|}{1-q}\right).$$
\end{theorem}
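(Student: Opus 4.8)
The plan is to run the standard analysis of inexact gradient ascent on the dual function $g_\rho(\cdot;\theta^*)$, but being careful that at iteration $k$ the algorithm actually takes a step using $\nabla_\lambda \mathcal{L}_\rho(x_k,\lambda_k;\theta_k)$ rather than $\nabla_\lambda g_\rho(\lambda_k;\theta^*)$ — so two error sources appear: the inexact-minimization error $\alpha_k$ (controlled by Lemma~\ref{Lemma: Aug_Rock_Lemmas}(iii)) and the misspecification error $\|\theta_k-\theta^*\|$ (controlled by Lemma~\ref{Lemma:Aug_Lips_cont_grad_g} and Assumption~\ref{Assump: learn_lin_rate}). First I would write $\lambda_{k+1}=\lambda_k+\rho\,\nabla_\lambda g_\rho(\lambda_k;\theta^*)+\rho e_k$, where $e_k\triangleq \nabla_\lambda\mathcal{L}_\rho(x_k,\lambda_k;\theta_k)-\nabla_\lambda g_\rho(\lambda_k;\theta^*)$, and bound $\|e_k\|$ by splitting it as $\|\nabla_\lambda\mathcal{L}_\rho(x_k,\lambda_k;\theta_k)-\nabla_\lambda g_\rho(\lambda_k;\theta_k)\| + \|\nabla_\lambda g_\rho(\lambda_k;\theta_k)-\nabla_\lambda g_\rho(\lambda_k;\theta^*)\| \le \sqrt{2\alpha_k/\rho} + M_h\|\theta_k-\theta^*\|$, exactly mirroring the decomposition used in~\eqref{proof:Aug_bnd_lambda_1}--\eqref{proof:Aug_bnd_lambda_2}.

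Next I would expand $\|\lambda_{k+1}-\lambda^*\|^2$. Using $\lambda_{k+1}-\lambda^*=(\lambda_k-\lambda^*)+\rho\nabla_\lambda g_\rho(\lambda_k;\theta^*)+\rho e_k$, this gives
$$
\|\lambda_{k+1}-\lambda^*\|^2=\|\lambda_k-\lambda^*\|^2+2\rho\langle\nabla_\lambda g_\rho(\lambda_k;\theta^*),\lambda_k-\lambda^*\rangle+\rho^2\|\nabla_\lambda g_\rho(\lambda_k;\theta^*)\|^2+2\rho\langle e_k,\lambda_{k+1}-\lambda^*\rangle-2\rho^2\langle e_k,\nabla_\lambda g_\rho(\lambda_k;\theta^*)\rangle-\rho^2\|e_k\|^2.
$$
The key two inequalities to invoke are: concavity of $g_\rho(\cdot;\theta^*)$, giving $g_\rho(\lambda^*;\theta^*)-g_\rho(\lambda_k;\theta^*)\le \langle\nabla_\lambda g_\rho(\lambda_k;\theta^*),\lambda^*-\lambda_k\rangle$; and the standard descent-lemma consequence of the $\tfrac1\rho$-Lipschitz gradient (Lemma~\ref{Lemma: Aug_Rock_Lemmas}(i)), giving $g_\rho(\lambda_{k+1};\theta^*)\ge g_\rho(\lambda_k;\theta^*)+\tfrac{\rho}{2}\|\nabla_\lambda g_\rho(\lambda_k;\theta^*)\|^2$ up to the $e_k$ cross-terms, which is what makes the $\rho^2\|\nabla_\lambda g_\rho\|^2$ term help rather than hurt. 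Combining these and collecting terms, I would arrive at an inequality of the shape
$$
2\rho\bigl(g_\rho(\lambda^*;\theta^*)-g_\rho(\lambda_k;\theta^*)\bigr)\le \|\lambda_k-\lambda^*\|^2-\|\lambda_{k+1}-\lambda^*\|^2+2\rho\|e_k\|\bigl(\|\lambda_{k+1}-\lambda^*\|+\rho\|\nabla_\lambda g_\rho(\lambda_k;\theta^*)\|\bigr),
$$
possibly after also replacing $g_\rho(\lambda_k;\theta^*)$ by $g_\rho(\lambda_{k+1};\theta^*)$ on the left via the descent step so the telescoped sum is about the iterates we average. Here I would bound $\|\lambda_{k+1}-\lambda^*\|\le C_\lambda$ by Theorem~\ref{Theorem: Aug_lambda_bnd_cont_rho}, and $\rho\|\nabla_\lambda g_\rho(\lambda_k;\theta^*)\|=\|\pi_\rho(\lambda_k;\theta^*)-\lambda^*\|\le\|\lambda_k-\lambda^*\|\le C_\lambda$ by nonexpansivity of $\pi_\rho$ (Lemma~\ref{Lemma:Non_exp_pi}) — so each error term is at most $4\rho C_\lambda\|e_k\|\le 4\rho C_\lambda(\sqrt{2\alpha_k/\rho}+M_h q_\ell^k\|\theta_0-\theta^*\|)$.

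Finally I would sum from $i=1$ to $k$ (or $0$ to $k-1$), telescoping the $\|\lambda_i-\lambda^*\|^2$ differences down to $\|\lambda_1-\lambda^*\|^2\le\ldots$; actually it is cleaner to sum from the start and absorb $\|\lambda_0-\lambda^*\|^2=\|\lambda^*\|^2$, then divide by $2\rho k$ and apply concavity of $g_\rho(\cdot;\theta^*)$ once more (Jensen) to pull $\tfrac1k\sum_i g_\rho(\lambda_i;\theta^*)\le g_\rho(\bar\lambda_k;\theta^*)$ out, yielding $f^*-g_\rho(\bar\lambda_k;\theta^*)\le \tfrac{1}{2\rho k}\|\lambda^*\|^2+\tfrac{C_\lambda}{k}\sum_{i\ge0}(\sqrt{2\alpha_i/\rho}+M_h q_\ell^i\|\theta_0-\theta^*\|)$; using $\sum_{i\ge0}q_\ell^i=\tfrac{1}{1-q}$ and matching constants gives exactly $B_g/k$. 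The main obstacle is bookkeeping: getting the cross-terms $\langle e_k,\cdot\rangle$ bounded by something summable (this is where Assumption~\ref{Assump:Aug_2}'s $\sum\sqrt{\alpha_k}<\infty$ and the linear rate of $\theta_k$ are both essential), and making sure every constant appearing matches the stated $B_g$ — in particular that the factors of $C_\lambda$ and $\sqrt{2/\rho}$ line up, which forces the particular way of bounding $\|\lambda_{k+1}-\lambda^*\|$ and $\rho\|\nabla_\lambda g_\rho\|$ by $C_\lambda$ above rather than by a looser constant.
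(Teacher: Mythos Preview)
Your proposal is correct and follows essentially the same route as the paper: split the step error into the inexactness part and the misspecification part, combine concavity of $g_\rho(\cdot;\theta^*)$ with its $1/\rho$-Lipschitz gradient, telescope $\|\lambda_i-\lambda^*\|^2$ starting from $\lambda_0=\mathbf{0}$, bound $\|\lambda_{i+1}-\lambda^*\|\le C_\lambda$ via Theorem~\ref{Theorem: Aug_lambda_bnd_cont_rho}, and finish with Jensen. The only organizational difference is that the paper starts from the descent inequality for $-g_\rho$, substitutes $\nabla_\lambda\mathcal{L}_\rho(x_i,\lambda_i;\theta_i)=\tfrac{1}{\rho}(\lambda_{i+1}-\lambda_i)$, and then invokes the three-point identity $\|\lambda_{i+1}-\lambda_i\|^2-2(\lambda_{i+1}-\lambda_i)^\top(\lambda_{i+1}-\lambda^*)=\|\lambda_i-\lambda^*\|^2-\|\lambda_{i+1}-\lambda^*\|^2$, which delivers the single error term $(\|\delta_i\|+\|s_i\|)\|\lambda_{i+1}-\lambda^*\|$ directly---your direct expansion of $\|\lambda_{k+1}-\lambda^*\|^2$ picks up a spurious $\rho\|\nabla_\lambda g_\rho(\lambda_k;\theta^*)\|$ factor (a minor algebraic slip in your displayed identity) that, if kept, would double the $C_\lambda$-coefficient in $B_g$, so to match the stated constant exactly follow the paper's three-point-identity packaging.
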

\begin{proof}
Note that from Lemma~\ref{Lemma: Aug_Rock_Lemmas} and using the fact that the duality gap for $\mathcal{C}(\theta^*)$ is 0, it follows that $f^*=\max_\lambda g_\rho(\lambda;\theta^*)$ for all $\rho>0$. Using the Lipschitz continuity of $\nabla_{\lambda} g_\rho(\lambda,\theta^*)$ in $\lambda$ with constant ${1/\rho}$, for $i\geq 0$, we get
\begin{align}
-g_\rho(\lambda_{i+1};\theta^*)&\leq -g_\rho(\lambda_i;\theta^*)-\nabla_{\lambda} g_\rho(\lambda_i;\theta^*)^\top(\lambda_{i+1}-\lambda_i)\notag\\
&\quad+\tfrac{1}{2\rho}\|\lambda_{i+1}-\lambda_i\|^2.\label{proof:ineq_decent}
\end{align}
Under the concavity of $g_\rho(\lambda;\theta^*)$ in $\lambda$, we have that
$$-g_\rho(\lambda^*;\theta^*)\geq -g_\rho(\lambda_i;\theta^*)-\nabla_{\lambda} g_\rho(\lambda_i;\theta^*)^T(\lambda^*-\lambda_i) .$$
By combining the above inequality and \eqref{proof:ineq_decent}, we get
\begin{align}
& \quad -g_\rho(\lambda_{i+1};\theta^*) \notag \\
%& \leq -g_\rho(\lambda_i;\theta^*)-\nabla_{\lambda} g_\rho(\lambda_i;\theta^*)^T(\lambda_{i+1}-\lambda_i)\notag+{1\over 2\rho}\|\lambda_{k+1}-\lambda_i\|^2 \\
& \leq -g_\rho(\lambda^*;\theta^*)-\nabla_{\lambda} g_\rho(\lambda_i;\theta^*)^T(\lambda_{i+1}-\lambda^*)+\tfrac{1}{2\rho}\|\lambda_{i+1}-\lambda_i\|^2  \notag \\
%\notag \\
		%& -\nabla_{\lambda} g_\rho(\lambda_i;\theta^*)^T(\lambda_{i+1}-\lambda_i)+{1\over 2\rho}\|\lambda_{k+1}-\lambda_i\|^2  \notag \\
%& = -g_\rho(\lambda^*;\theta^*)-\nabla_{\lambda} g_\rho(\lambda_i;\theta^*)^T(\lambda_{i+1}-\lambda^*)+{1\over 2\rho}\|\lambda_{i+1}-\lambda_i\|^2 \notag \\
& \notag =-g_\rho(\lambda^*;\theta^*)-\nabla_{\lambda} {\cal
	L}_\rho(x_i,\lambda_i;\theta_{i})^T(\lambda_{i+1}-\lambda^*)\notag\\
&\quad+\delta_i^T(\lambda_{i+1}-\lambda^*)+s_i^T(\lambda_{i+1}-\lambda^*)+\tfrac{1}{2\rho}\|\lambda_{i+1}-\lambda_i\|^2
	\notag \\
&\notag \leq-g_\rho(\lambda^*;\theta^*)-\tfrac{1}{\rho}
(\lambda_{i+1}-\lambda_{i})^T(\lambda_{i+1}-\lambda^*)+\tfrac{1}{2\rho}\|\lambda_{i+1}-\lambda_i\|^2\\
&\quad+\|\delta_i\|\|\lambda_{i+1}-\lambda^*\|+\|s_i\|\|\lambda_{i+1}-\lambda^*\|,\label{ineq-bd-lamb}
\end{align}
where {$\delta_i\triangleq \nabla_{\lambda} g_\rho(\lambda_i;\theta_{i})-\nabla_{\lambda} g_\rho(\lambda_i;\theta^*)$} and {$s_i\triangleq \nabla_{\lambda} {\cal L}_\rho(x_i,\lambda_i;\theta_{i})-\nabla_{\lambda} g_\rho(\lambda_i;\theta_{i})$}.
By noting that
$\|\lambda_{i+1}-\lambda_i\|^2+2(\lambda_{i+1}-\lambda_{i})^T(\lambda^*-\lambda_{i+1})=\|\lambda_i-\lambda^*\|^2-\|\lambda_{i+1}-\lambda^*\|^2$,
	we can rewrite \eqref{ineq-bd-lamb} as
\begin{align} \notag
-g_\rho(\lambda_{i+1};\theta^*)&\leq-g_\rho(\lambda^*;\theta^*)+\left(\|\delta_i\|+\|s_i\|\right)\|\lambda_{i+1}-\lambda^*\|\\
&\quad+\tfrac{1}{2\rho}\left(\|\lambda_i-\lambda^*\|^2-\|\lambda_{i+1}-\lambda^*\|^2\right).\label{bd-lamb-ineq2}
\end{align}
By summing \eqref{bd-lamb-ineq2} over $i=0,\ldots,k-1$, replacing
$g_\rho(\lambda^*;\theta^*)$ by $f^*=\sup_\lambda g_\rho(\lambda,\theta^*)$ and setting $\lambda_0=\mathbf{0}$, we obtain
\begin{align}
&-\sum_{i=0}^{k-1} \Big(g_\rho(\lambda_{i+1};\theta^*)-%\sup_\lambda g_\rho(\lambda;\theta^*)
f^*\Big)+\tfrac{1}{2\rho} \|\lambda_{k}-\lambda^*\|^2\notag\\
&\leq\tfrac{1}{2\rho} \|\lambda^*\|^2+\sum_{i=0}^{k-1}\left(\|\delta_i\|+\|s_i\|\right)\|\lambda_{i+1}-\lambda^*\|.
\label{proof: bnd_dual_sub_opt}
\end{align}
Under concavity of $g_\rho(\lambda;\theta^*)$ in $\lambda$, the following holds:
\begin{align*}
- \Big(g_\rho(\bar{\lambda}_{k};\theta^*)-%\sup_{\lambda} g(\lambda,\theta^*)
f^*\Big)\leq -{1\over k}\sum_{i=0}^{k-1} \Big(g_\rho(\lambda_{i+1};\theta^*)-%\sup_\lambda g_\rho(\lambda,\theta^*)
f^*\Big).
 \end{align*}
By dividing both sides of \eqref{proof: bnd_dual_sub_opt} by $k$ and dropping the positive term on the left hand side, we get
\begin{align} \notag
&\quad \ f^* %\sup_\lambda g_\rho(\lambda;\theta^*)
-g_\rho(\bar{\lambda}_{k};\theta^*)\\
& \leq\frac{1}{k}\left(\tfrac{1}{2\rho}\|\lambda^*\|^2 +\sum_{i=0}^{k-1}\left(\|\delta_i\|+\|s_i\|\right)\|\lambda_{i+1}-\lambda^*\|\right).\notag
%& \leq\frac{1}{k+1}\left(\tfrac{1}{2\rho}\|\lambda_{0}-\lambda^*\|^2 +\sum_{i=0}^\infty\left(\|\delta_i\|+\|s_i\|\right)\|\lambda_{i+1}-\lambda^*\|\right) \notag
%\label{proof:averg_bnd}
\end{align}
Lemma~\ref{Lemma: Aug_Rock_Lemmas} and Lemma~\ref{Lemma:Aug_Lips_cont_grad_g} imply that $\|s_i\|\leq\sqrt{
	{2\alpha_i\over \rho}}$, and
$\|\delta_i\|\leq M_{h}\|\theta_{i}-\theta^*\|$, resp., for all $i\geq 0$. In addition, from Theorem~\ref{Theorem: Aug_lambda_bnd_cont_rho}, we have $\|\lambda_i-\lambda^*
\|\leq C_{\lambda}$ for all $i\geq 1$. Then by the summability of $\sqrt{\alpha_i}$, we have that
\begin{eqnarray}
\lefteqn{\sum_{i=0}^{\infty} (\|\delta_i\|+\|s_i\|)\|\lambda_{i+1}-\lambda^*\|}\notag\\
 &\leq & C_{\lambda}\left(M_h \sum_{i=0}^{\infty} \|\theta_{i}-\theta^*\|+\sqrt{\tfrac{2}{\rho}}\sum_{i=0}^\infty\sqrt{\alpha_i}\right).\label{proof:bnd_subopt_1}
\end{eqnarray}
Furthermore, substituting $\sum_{i=0}^{\infty} \|\theta_{i}-\theta^*\|= \|\theta_0-\theta^*\|/(1-q)$ into \eqref{proof:bnd_subopt_1} gives the desired bound and completes the proof.
%\begin{align}
%\sum_{i=0}^{\infty} \|\delta_i\|\|\lambda_{i+1}-\lambda^*\|
%&\leq \sum_{i=0}^{\infty}
%C_{\lambda}M_{h,\theta}\|\theta_{i}-\theta^*\|\notag \\
%	& =C_\lambda M_{h,\theta}{\|\theta_0-\theta^*\| \over 1-q_\ell}.
%\label{proof:bnd_subopt_2}
%\end{align}
%Substituting ~\eqref{proof:bnd_subopt_1} and ~\eqref{proof:bnd_subopt_2}
%into ~\eqref{proof:averg_bnd}, the required bound is obtained.
\end{proof}

Next, we derive a bound on the primal {\it infeasibility}, where the primal
iterate sequence is computed such that Step~1 in Algorithm~\ref{Alg:
	Aug_const_rho} is satisfied. Prior to proving our main result, we provide some
	supporting technical lemmas.
\begin{lemma}
\label{Lemma:Aug_grad_bnd}
Assume that $\phi(\lambda):\R^m\to \R$ is a concave function whose
supremum is finite and is attained at $\lambda_{\phi}^*$. In addition,
 assume that $\nabla \phi$ is Lipschitz continuous with constant
 $L_{\phi}$. Then, for all $\lambda\in \R^m$, we have that
$\|\nabla \phi(\lambda)\|\leq \sqrt{2L_{\phi}\big(\phi(\lambda_{\phi}^*)-\phi(\lambda)\big)}.$
\end{lemma}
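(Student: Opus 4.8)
The plan is to invoke the standard quadratic upper bound (the ``descent lemma'') for functions with Lipschitz gradient, applied to the convex function $-\phi$, and then to optimize over a single gradient step. First I would record that since $\nabla\phi$ is Lipschitz with constant $L_\phi$, for any $\lambda,\mu\in\R^m$ one has
\begin{align*}
\phi(\mu)\ \geq\ \phi(\lambda)+\nabla\phi(\lambda)^\top(\mu-\lambda)-\tfrac{L_\phi}{2}\|\mu-\lambda\|^2.
\end{align*}
This is the usual consequence of Lipschitz continuity of the gradient together with concavity (equivalently, the descent lemma for $-\phi$), obtained by writing $\phi(\mu)-\phi(\lambda)=\int_0^1\nabla\phi(\lambda+t(\mu-\lambda))^\top(\mu-\lambda)\,dt$ and bounding the integrand.

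Next I would specialize this inequality to the point $\mu=\lambda+\tfrac{1}{L_\phi}\nabla\phi(\lambda)$, i.e.\ a single gradient-ascent step from $\lambda$. Substituting gives
\begin{align*}
\phi\!\left(\lambda+\tfrac{1}{L_\phi}\nabla\phi(\lambda)\right)\ \geq\ \phi(\lambda)+\tfrac{1}{L_\phi}\|\nabla\phi(\lambda)\|^2-\tfrac{1}{2L_\phi}\|\nabla\phi(\lambda)\|^2\ =\ \phi(\lambda)+\tfrac{1}{2L_\phi}\|\nabla\phi(\lambda)\|^2.
\end{align*}
Since the supremum of $\phi$ is finite and attained at $\lambda_\phi^*$, the left-hand side is at most $\phi(\lambda_\phi^*)$, so $\phi(\lambda_\phi^*)\geq\phi(\lambda)+\tfrac{1}{2L_\phi}\|\nabla\phi(\lambda)\|^2$. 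Rearranging yields $\|\nabla\phi(\lambda)\|^2\leq 2L_\phi\big(\phi(\lambda_\phi^*)-\phi(\lambda)\big)$, and taking square roots (the right-hand side is nonnegative since $\lambda_\phi^*$ is a maximizer) gives the claimed bound.

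There is essentially no hard step here; the only point requiring a moment's care is justifying the quadratic lower bound for a concave function with Lipschitz gradient (rather than quoting it only for smooth convex functions), but this is immediate by applying the standard statement to $-\phi$. I would keep the write-up to those three displayed inequalities.
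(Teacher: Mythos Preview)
Your argument is correct and is precisely the standard proof of the inequality the paper invokes; the paper itself does not give a proof but simply cites Theorem~2.1.5 in Nesterov's \emph{Introductory Lectures on Convex Optimization}, of which your three-line derivation is the usual justification. One minor remark: the quadratic lower bound you use follows from Lipschitz continuity of $\nabla\phi$ alone (via the integral representation you mention), so concavity is not actually needed for that step---it is only used implicitly to guarantee that the finite supremum is a global maximum so that $\phi(\mu)\le\phi(\lambda_\phi^*)$ for all $\mu$.
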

This is an immediate result of Theorem 2.1.5 in~\cite{nesterov2013introductory}.
%\begin{proof}
%\ha{
%}
%\begin{comment}
%Lipschitz continuity of $\nabla \phi$ implies that
%\begin{align*}
%\phi(\lambda')\geq \phi(\lambda)+\nabla_{\lambda} \phi(\lambda)^\top(\lambda'-\lambda)-{L_{\Phi}\over 2}\|\lambda'-\lambda\|^2.
%\end{align*}
%for all $\lambda,\lambda'\in \R^m$. Taking the supremum over $\lambda$ from both sides of above inequality proves the claim after rearranging the terms, i.e.,
%\begin{align*}
%\phi(\lambda^*_{\phi})&\geq \phi(\lambda)+\sup_{\lambda' \in \R^m}\{\nabla \phi(\lambda)^T(\lambda'-\lambda)-{L_{\phi}\over 2}\|\lambda'-\lambda\|^2\}\\
%%&=\phi(\lambda)+\sup_{u \in \R^m}\{\nabla \phi(\lambda)^Tu-{L_{\phi}\over 2}\|u\|^2\}\\
%&=\phi(\lambda)+{\|\nabla \phi(\lambda)\|^2\over 2L_{\phi}}.
%\end{align*}
%\end{comment}
%\end{proof}
Next, we derive a bound on $d_{\R_+^m}\big(y+y'\big)$ for any $y,
	y' \geq 0$.
\begin{lemma}
\label{Lemma:d_triangle}
For all $y,y'\in \R_{+}^m$, $d_{\R_+^m}\big(y+y'\big)\leq d_{\R_+^m}\big(y\big)+\|y'\|.$
\end{lemma}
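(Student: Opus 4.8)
The plan is to read the statement as an instance of the (entirely standard) $1$-Lipschitz continuity of the Euclidean distance function to a fixed nonempty set, here $\R_+^m$, and to dispatch it in essentially one line; the hypotheses $y,y'\ge 0$ are not in fact needed for the displayed bound and only serve to make it sharper, so I would record both the general estimate and the orthant-specific improvement.

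First I would recall that for any nonempty closed $C\subseteq\R^m$ the map $d_C$ is $1$-Lipschitz: if $\bar s\in\argmin_{s\in C}\|y-s\|$, then for any $y'$ one has $d_C(y+y')\le\|(y+y')-\bar s\|\le\|y-\bar s\|+\|y'\|=d_C(y)+\|y'\|$; taking $C=\R_+^m$ gives the claim verbatim. Then I would note that, since the projection of $a\in\R^m$ onto $\R_+^m$ is the componentwise positive part and hence $d_{\R_+^m}(a)^2=\sum_{i=1}^m\big(\max\{-a_i,0\}\big)^2$, the hypothesis $y'\ge 0$ forces $\max\{-(y_i+y_i'),0\}\le\max\{-y_i,0\}$ coordinatewise, whence the stronger $d_{\R_+^m}(y+y')\le d_{\R_+^m}(y)$; appending $\|y'\|\ge 0$ on the right recovers the stated inequality. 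Equivalently, in this second argument one simply observes that $\bar s+y'\in\R_+^m$ whenever $\bar s\ge 0$ and $y'\ge 0$, so that $d_{\R_+^m}(y+y')\le\|(y+y')-(\bar s+y')\|=\|y-\bar s\|=d_{\R_+^m}(y)$ — this is the form that generalizes to non-orthant feasibility shifts.

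I do not anticipate any genuine obstacle: the lemma is an elementary estimate, and the only point needing attention is checking that the shifted candidate point $\bar s+y'$ remains feasible, which is precisely where $y'\ge 0$ enters (and why the right-hand side can in fact be tightened). The lemma is used downstream for the primal-infeasibility bound: applied with $y$ the averaged feasibility residual produced by the dual recursion and $y'$ the accumulated parameter-misspecification error $\tfrac{1}{k+1}\sum_i\big(h(x_i;\theta^*)-h(x_i;\theta_i)\big)$ — whose norm is controlled by $\tfrac{M_h}{k+1}\sum_i\|\theta_i-\theta^*\|\le\tfrac{M_h\|\theta_0-\theta^*\|}{(k+1)(1-q)}$ via Assumption~\ref{Assump: learn_lin_rate} — together with Jensen's inequality for the affine map $x\mapsto h(x;\theta^*)$, it separates the misspecification contribution from the exact-information contribution and yields the announced $\mathcal{O}(1/\sqrt{k})$ rate on $d_{\R^m_-}(h(\bar{x}_k;\theta^*))$.
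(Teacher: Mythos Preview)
Your proof is correct and essentially matches the paper's: both arguments amount to the $1$-Lipschitz continuity of $d_C$, with the paper phrasing it as non-expansivity of $\Pi^c_{\R_+^m}(x)\triangleq x-\Pi_{\R_+^m}(x)$ while you give the direct triangle-inequality version (and correctly note that the sign hypotheses are not needed for the displayed bound). Your remark about the downstream application is slightly off --- in the paper the lemma is invoked (for $\R_-^m$) in the primal \emph{suboptimality} bounds of Theorem~\ref{Theorem: Aug_prim_low_bnd_cons_rho} to split off the dual-variable shift $\lambda/\rho$, not in the primal-infeasibility bound to separate the misspecification error --- but this does not affect the validity of your proof of the lemma itself.
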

\begin{proof}
The result immediately follows from the definition $d_{\R_+^m}$ and the non-expansivity of $\Pi^c_{\R_+^m}(x)\triangleq x-\Pi_{\R_+^m}(x)$.
%the triangle
%inequality, we obtain the following inequality:
%\begin{align*}
%d_{\R_+^m}\big(y+y'\big) & =\|y+y'-\Pi_{\R_+^m}(y+y')\|\\
%		& =\left\|y-\Pi_{\R_+^m}(y)\right. \\
%		& \left. +y'+y-y+\Pi_{\R_+^m}(y)-\Pi_{\R_+^m}(y'+y)\right\|\\
%&\leq \|y-\Pi_{\R_+^m}(y)\|\\
%	& +\|y'+y-\Pi_{\R_+^m}(y'+y)-(y-\Pi_{\R_+^m}(y))\|.
%\end{align*}
%Define $\Pi^c_{\R_+^m}(x)\triangleq x-\Pi_{\R_+^m}(x)$, which can be shown easily that it is a nonexpansive operator. Using this operator, we can rewrite the above inequality as follows:
%\begin{align*}
%d_{\R_+^m}\big(y+y'\big)& \leq \|y-\Pi_{\R_+^m}(y)\|+\|\Pi^c_{\R_+^m}(y+y')-\Pi^c_{\R_+^m}(y)\|\\
%&\leq d_{\R_+^m}\big(y\big)+\|y'\|,
%\end{align*}
%where the last inequality follows from nonexpansivity of $\Pi^c_{\R_+^m}$.
\end{proof}
We now derive the bound on the primal infeasibility. %after $k$ iterations.
\begin{theorem}[\textbf{Bound on primal infeasibility}]
\label{Theorem:Aug_prim_infeas_cons_rho}
Let Assumptions~\ref{Assump: Aug_1}--\ref{Assump:Aug_2} hold and let
$\{\lambda_k\}_{k\geq 0}$ and $\{x_k\}_{k\geq 0}$ denote the sequences generated by
Algorithm~\ref{Alg: Aug_const_rho}. Furthermore, let $\xbar_k={1\over k+1}{\sum_{i=0}^{k} x_i}$. Then, it follows that
\begin{align}
d_{\R_-^m}\Big(h(\xbar_k,\theta^*)\Big)\leq{\cal V}(k)\triangleq \frac{C_1}{\sqrt{k+1}}+\frac{C_2}{k+1},\label{Def:V(k)}
\end{align}
where %${\cal V}(k)$ is defined as
$C_1:=\sqrt{\frac{2B_g}{\rho}+\left(\frac{C_\lambda}{\rho}\right)^2}$, and $C_2:=\sqrt{\tfrac{2}{\rho}}\sum_{i=0}^\infty
	\sqrt{\alpha_i} +{ (L_{h}+M_{h}) \|\theta_0-\theta^*\| \over 1-q}$.
\end{theorem}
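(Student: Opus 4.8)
The plan is to mimic the standard augmented-Lagrangian infeasibility argument (as in Nedić--Ozdaglar / Rockafellar, and the recent refinement in~\cite{1506.05320}), while carefully tracking the extra error terms coming from the misspecification $\theta_i \neq \theta^*$. The starting point is the identity from Step~2 of Algorithm~\ref{Alg: Aug_const_rho} together with the formula~\eqref{Aug_L_formu}: since $\nabla_\lambda {\cal L}_\rho(x,\lambda;\theta) = \rho\big(\tfrac{\lambda}{\rho}+h(x;\theta) - \Pi_{\R^m_-}(\tfrac{\lambda}{\rho}+h(x;\theta))\big) - \lambda/\rho \cdot \text{(scaling)}$, one has $\tfrac{1}{\rho}(\lambda_{i+1}-\lambda_i) = \nabla_\lambda {\cal L}_\rho(x_i,\lambda_i;\theta_i)$, which equals $d_{\R^m_-}$-type projection residual of $h(x_i;\theta_i)$ shifted by $\lambda_i/\rho$. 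First I would write $\tfrac{1}{\rho}\|\lambda_{i+1}-\lambda_i\| = \|\nabla_\lambda{\cal L}_\rho(x_i,\lambda_i;\theta_i)\|$ and bound the right-hand side by a sum of (a) $\|\nabla_\lambda g_\rho(\lambda_i;\theta_i)\|$ — controlled via Lemma~\ref{Lemma:Aug_grad_bnd} applied to $\phi=g_\rho(\cdot;\theta_i)$ in terms of dual suboptimality $g_\rho(\lambda^*_i;\theta_i)-g_\rho(\lambda_i;\theta_i)$ — plus (b) the inexactness term $\|s_i\|\le\sqrt{2\alpha_i/\rho}$ from Lemma~\ref{Lemma: Aug_Rock_Lemmas}(iii).

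Next, I would convert the telescoped dual-descent estimate~\eqref{proof: bnd_dual_sub_opt} from the proof of Theorem~\ref{Theorem:Aug_bnd_dual_sub} into a bound on $\sum_{i=0}^{k}\|\nabla_\lambda g_\rho(\lambda_i;\theta^*)\|^2$ (or the analogous quantity with $\theta_i$), using $\|\lambda_{k+1}-\lambda^*\|^2 \ge 0$ and the already-established $B_g/k$ dual-suboptimality bound; the key point is that $\sum_i\big(g_\rho(\lambda^*;\theta^*)-g_\rho(\lambda_{i+1};\theta^*)\big)$ is bounded by a \emph{constant} ($\tfrac{1}{2\rho}\|\lambda^*\|^2 + C_\lambda(\cdots)$), so $\tfrac{1}{\rho^2}\sum_{i=0}^{k}\|\lambda_{i+1}-\lambda_i\|^2 \le 2B_g/\rho + (C_\lambda/\rho)^2$, which explains the constant $C_1$. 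By Cauchy--Schwarz, $\sum_{i=0}^{k}\|\lambda_{i+1}-\lambda_i\| \le \sqrt{(k+1)}\,\sqrt{\sum\|\lambda_{i+1}-\lambda_i\|^2}$, giving an $O(\sqrt{k+1})$ bound on the running sum of step sizes.

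Then comes the translation to primal infeasibility. Using the projection characterization, $d_{\R^m_-}\!\big(h(x_i;\theta_i)\big) \le d_{\R^m_-}\!\big(\tfrac{\lambda_i}{\rho}+h(x_i;\theta_i)\big) + \tfrac{1}{\rho}\|\lambda_i\|$-type manipulations reduce to $\tfrac{1}{\rho}\|\lambda_{i+1}-\lambda_i\| + \|s_i\|$-sized quantities; averaging over $i=0,\dots,k$ and invoking convexity of $h(\cdot;\theta^*)$ and of $d_{\R^m_-}(\cdot)$ (a norm composed with an affine map — here Assumption~\ref{Assump: Aug_1}(iii) is essential) gives $d_{\R^m_-}(h(\xbar_k;\theta^*)) \le \tfrac{1}{k+1}\sum_{i=0}^k d_{\R^m_-}(h(x_i;\theta^*))$. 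To replace $\theta_i$ by $\theta^*$ inside each term I would use Lemma~\ref{Lemma:d_triangle} and the Lipschitz bound $\|h(x_i;\theta_i)-h(x_i;\theta^*)\| \le L_h\|\theta_i-\theta^*\|$ together with $\|\delta_i\|\le M_h\|\theta_i-\theta^*\|$, and then sum the geometric series $\sum_i\|\theta_i-\theta^*\| \le \|\theta_0-\theta^*\|/(1-q)$ — this is where the $(L_h+M_h)\|\theta_0-\theta^*\|/(1-q)$ part of $C_2$ appears, while the $\sqrt{2/\rho}\sum\sqrt{\alpha_i}$ part comes from summing $\|s_i\|$. Collecting the $O(\sqrt{k+1})$ term (divided by $k+1$, hence $C_1/\sqrt{k+1}$) and the $O(1)$ constant terms (divided by $k+1$, hence $C_2/(k+1)$) yields~\eqref{Def:V(k)}.

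The main obstacle I anticipate is bookkeeping the misspecification errors cleanly: the dual-descent inequality~\eqref{proof: bnd_dual_sub_opt} is written relative to $g_\rho(\cdot;\theta^*)$, but the actual iteration updates $\lambda_{i+1}$ using $\nabla_\lambda{\cal L}_\rho(x_i,\lambda_i;\theta_i)$ at the \emph{wrong} parameter, so one must consistently split $\nabla_\lambda{\cal L}_\rho(x_i,\lambda_i;\theta_i) = \nabla_\lambda g_\rho(\lambda_i;\theta^*) + \delta_i + s_i$ and make sure the cross terms $(\delta_i+s_i)^\top(\cdots)$ are absorbed into already-summable quantities rather than into the $O(\sqrt{k})$ growth — getting the exact constants $C_1,C_2$ right requires care, but no new idea beyond Cauchy--Schwarz, the geometric-series bound on $\sum\|\theta_i-\theta^*\|$, and the two supporting lemmas.
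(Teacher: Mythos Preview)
Your proposal is correct and follows essentially the same route as the paper: decompose $u_i:=\nabla_\lambda{\cal L}_\rho(x_i,\lambda_i;\theta_i) = \nabla_\lambda g_\rho(\lambda_i;\theta^*)+\delta_i+s_i$, bound the first piece via Lemma~\ref{Lemma:Aug_grad_bnd} together with the telescoped estimate~\eqref{proof: bnd_dual_sub_opt}, and push the summable $\delta_i,s_i$ contributions into $C_2$; the paper applies concavity of $\sqrt{\cdot}$ (Jensen) where you invoke Cauchy--Schwarz, and relates $h(x_i;\theta_i)$ to $u_i$ componentwise ($h_j(x_i;\theta_i)\le[u_i]_j$) rather than through $d_{\R^m_-}$ directly, but these variations produce identical bounds. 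One small slip: what is bounded by $C_1^2$ is $\sum_i\|\nabla_\lambda g_\rho(\lambda_i;\theta^*)\|^2$, not literally $\tfrac{1}{\rho^2}\sum_i\|\lambda_{i+1}-\lambda_i\|^2$ (the latter already contains the $\delta_i,s_i$ terms, which belong in $C_2$), but your final paragraph already flags exactly this bookkeeping as the main care point.
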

\begin{proof}
%For  $i=1,\hdots,k$, $F_0(x_i,u_i;\theta_{i})$ is finite at
Let $u_i:=\nabla_\lambda{\cal L}_\rho(x_i,\lambda_i;\theta_i)$ for all $i\geq 0$. Note that computing $\nabla_\lambda{\cal L}_\rho$ using \eqref{Aug_L_formu}, we get $u_i=h\left(x_i;\theta_i\right)+\Pi_{\R_+^m}\left(-\tfrac{\lambda_i}{\rho}-h\left(x_i;\theta_i\right)\right)$;
hence, it trivially follows that
  \begin{align}h_j(x_i,\theta_{i})\leq [u_i]_j, \hbox{ for } j=1,\hdots,m. \label{bd-u} \end{align}
 Under Assumption~\ref{Assump: Aug_1}(iv), we have that
$$ \left| h_j(x_i,\theta_i) - h_j(x_i,\theta^*) \right| \leq
L_{h}^j \|\theta_i - \theta^*\|.$$
%implying that $h_j(x_i,\theta_{i})\geq h_j(x_i,\theta^*)-L^j_{h}\|\theta_{i}-\theta^*\|$.
Combining this with \eqref{bd-u}, we obtain
 \begin{align}
 \label{bd-u2}
 h_j(x_i,\theta^*)\leq
 [u_i]_j+L^j_{h}\|\theta_{i}-\theta^*\|.\end{align}
 By summing \eqref{bd-u2} from $i=0$ to $i=k$, it follows that
   \begin{align}
   \label{bd-h}
   \sum_{i=0}^k h_j(x_i,\theta^*)\leq \sum_{i=0}^k [u_i]_j+\sum_{i=0}^k
   L^j_{h}\|\theta_{i}-\theta^*\| .\end{align}
 On the other hand, convexity of $h_j(x,\theta^*)$ in $x$ implies that
  $$h_j(\xbar_k,\theta^*)\leq {1\over k+1} \sum_{i=0}^k h_j(x_i,\theta^*);$$
 hence, for all $j=1,\dots,m$, we have from \eqref{bd-h},
\begin{align}
h_j(\xbar_k,\theta^*) \leq {1\over k+1}\left(\sum_{i=0}^k [u_i]_j+\sum_{i=0}^k L^j_{h}\|\theta_{i}-\theta^*\|\right). \label{bd-h2}
%\notag &\leq {1\over k+1}\sum_{i=0}^k (u_i)_j+{1\over k+1} \left(\max_{j=1,\hdots,m} L^j_{h,\theta}\right) \sum_{i=0}^k\|\theta_{i}-\theta^*\|\\
%&\leq {1\over k+1}\left(\sum_{i=0}^k [u_i]_j+L_{h}\sum_{i=0}^k\|\theta_{i}-\theta^*\|\right),
 \end{align}
Hence, $L_{h} \triangleq \max\{L^j_{h}:\ j=1,\hdots,m\}$, and \eqref{bd-h2} imply that
\begin{align}
& \quad d_{\R_-^m}\Big(h(\xbar_k,\theta^*)\Big)
\leq {1\over k+1}\left(\left\|\sum_{i=0}^k u_i\right\|+ L_{h}\sum_{i=0}^k \|\theta_{i}-\theta^*\|\right)\notag\\
&\leq \frac{1}{k+1}\left(\sum_{i=0}^k \|u_i\|+ L_{h}\sum_{i=0}^k \|\theta_{i}-\theta^*\|\right).\label{proof:bnd_prim_infea_2}
\end{align}
Recall that from Lemma~\ref{Lemma: Aug_Rock_Lemmas} (iii), for $i=0,\hdots,k,$
$$\Big\|\nabla_{\lambda} {\cal
	L}_\rho(x_i,\lambda_i;\theta_{i})-\nabla_{\lambda}
	g_\rho(\lambda_i;\theta_{i})\Big\|\leq \sqrt{\frac{2\alpha_i}{\rho}};$$
therefore, we obtain that $\|u_i\|=\|\nabla_{\lambda} {\cal
	L}_\rho(x_i,\lambda_i;\theta_{i})\|\leq \|\nabla_{\lambda}
	g_\rho(\lambda_i;\theta_{i})\|+\sqrt{2\alpha_i/\rho}$. In addition, since  $\|\nabla_{\lambda} g_\rho(\lambda_i;\theta_{i})\|\leq \|\nabla_{\lambda} g_\rho(\lambda_i;\theta^*)\|+M_{h}\|\theta_{i}-\theta^*\|$, we get the following bound:
$$\|u_i\|\leq \|\nabla_{\lambda}
g_\rho(\lambda_i;\theta^*)\|+\sqrt{2\alpha_i/\rho}+M_{h,\theta}\|\theta_{i}-\theta^*\|.$$
On the other hand, by Lemma~\ref{Lemma:Aug_grad_bnd}, we have
$$ \|\nabla_{\lambda} g_\rho(\lambda_i;\theta^*)\|\leq \sqrt{ \tfrac{2}{\rho} \left(f^*-g_\rho(\lambda_i;\theta^*)\right)}.$$
Combining this with the previous inequality leads to
\begin{align}
\|u_i\| & \leq \sqrt{\frac{2}{\rho}
	\Big(f^*-g_\rho(\lambda_i;\theta^*)\Big)}+\sqrt{\frac{2\alpha_i}{\rho}}+M_{h}\|\theta_{i}-\theta^*\|.\notag
\end{align}
By substituting this bound into~\eqref{proof:bnd_prim_infea_2}, we get that
\begin{align}
& \quad d_{\R_-^m}\Big(h(\xbar_k,\theta^*)\Big) \leq {1\over k+1}\sum_{i=0}^k \sqrt{ {2\over \rho} \Big(f^*-g_\rho(\lambda_i;\theta^*)\Big)}\notag \\
& \quad + {1\over k+1}\left(\sum_{i=0}^k \sqrt{\frac{2\alpha_i}{\rho}}+(L_{h}+M_{h})\sum_{i=0}^k \|\theta_{i}-\theta^*\|\right)\notag\\
&\leq \sqrt{ {2\over \rho} \left(f^*-{1\over k+1}\sum_{i=0}^kg_\rho(\lambda_i;\theta^*)\right)}\notag \\
&\quad+{1\over k+1}\left(\sum_{i=0}^k \sqrt{\frac{2\alpha_i}{\rho}}+(L_{h}+M_{h})\sum_{i=0}^k \|\theta_{i}-\theta^*\|\right),
\label{proof:bnd_prim_infea_5}
\end{align}
where the last inequality follows from concavity of square-root function $\sqrt{\cdot}$. The first term in
\eqref{proof:bnd_prim_infea_5} can be bounded using \eqref{proof: bnd_dual_sub_opt}, which states that
\begin{align}
&f^*-{1\over k+1}\sum_{i=0}^kg_\rho(\lambda_i;\theta^*)\notag\\&\leq
  {1 \over k+1}\left(B_g+g_\rho(\lambda_0;\theta^*)-g_\rho(\lambda_{k+1};\theta^*)\right).
\label{proof:bnd_prim_infea_3}
\end{align}
Note that $g_\rho(\lambda_0;\theta^*)- f^*\leq 0$, and using Lipschitz continuity of $\nabla g_\rho$, we have $f^*-g_\rho(\lambda_{k+1};\theta^*)\leq\tfrac{1}{2\rho}\|\lambda_{k+1}-\lambda^*\|^2\leq \tfrac{1}{2\rho} C_\lambda^2$.
The remaining terms in
\eqref{proof:bnd_prim_infea_5} can also be bounded:
\begin{eqnarray}
\lefteqn{{1\over k+1}\left(\sum_{i=0}^k \sqrt{\frac{2\alpha_i}{\rho}} +
(L_{h}+M_{h})\sum_{i=0}^k \|\theta_{i}-\theta^*\|\right)\leq}\notag\\
& & {1\over k+1}\left[\sum_{i=0}^\infty \sqrt{\frac{2\alpha_i}{\rho}} +{ (L_{h}+M_{h})
	\|\theta_0-\theta^*\| \over 1-q}\right].\label{proof:bnd_prim_infea_4}
\end{eqnarray}
The result follows by incorporating these bounds %~\eqref{proof:bnd_prim_infea_3} and~\eqref{proof:bnd_prim_infea_4}
into \eqref{proof:bnd_prim_infea_5}.
\end{proof}
We now proceed to derive
and upper bounds on
$f(\xbar_k,\theta^*)-f^*$. In contrast with standard
unconstrained convex optimization, $f(\xbar_k, \theta^*)$ could be less than
$f^*$, as a consequence of infeasibility of $\xbar_k$.

\begin{theorem}[{\bf bound on primal suboptimality}]
\label{Theorem: Aug_prim_low_bnd_cons_rho}
Let Assumption~\ref{Assump: Aug_1}--\ref{Assump:Aug_2} hold and let $\{x_k\}$ and $\{\lambda_k\}$ be the
sequences generated by Algorithm~\ref{Alg: Aug_const_rho}. In addition,
let $\xbar_k={1\over k+1}\sum_{i=0}^k x_k$. Then the following holds:
\begin{align}
f(\xbar_k;\theta^*)-f^* & \geq -{\rho \over 2} {\cal V}^2(k)-\|\lambda^*\|{\cal V}(k)\\ %(x^*;\theta^*)
f(\xbar_k;\theta^*)- f^* & \leq \frac{U}{k},
\end{align}
for any $\lambda^*\in\argmax\, g_0(\lambda,\theta^*)$, where ${\cal V}(k)$ is defined in Theorem~\ref{Theorem:Aug_prim_infeas_cons_rho},
$U:=\tfrac{\rho}{2}L^2_{h}\frac{\|\theta_0-\theta^*\|^2}{1-q_\ell^2}+\left(\bar{C}L_{h}+2L_{f}\right)\frac{\|\theta_0-\theta^*\|}{1-q_\ell}+\sum_{i=0}^\infty \alpha_i$.
\end{theorem}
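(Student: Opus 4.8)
The plan is to prove the two inequalities separately: the lower bound by evaluating the augmented dual function $g_\rho(\cdot;\theta^*)$ at $\lambda^*$ and invoking the infeasibility estimate ${\cal V}(k)$, and the upper bound by a telescoping argument on the augmented Lagrangian values $\{{\cal L}_\rho(x_i,\lambda_i;\theta_i)\}$ along the iterates. For the \textbf{lower bound}: since $\xbar_k$ is a convex combination of the $x_i\in X$ and $X$ is convex, $\xbar_k\in X$, hence $g_\rho(\lambda^*;\theta^*)\leq {\cal L}_\rho(\xbar_k,\lambda^*;\theta^*)$. By Lemma~\ref{Lemma: Aug_Rock_Lemmas}(i), the zero duality gap, and the fact (used in the proof of Theorem~\ref{Theorem: Aug_lambda_bnd_cont_rho}) that $\lambda^*\in\argmax_\lambda g_\rho(\lambda;\theta^*)$, we have $g_\rho(\lambda^*;\theta^*)=f^*$, so expanding the right-hand side via \eqref{Aug_L_formu} gives $f^*\leq f(\xbar_k;\theta^*)+\tfrac{\rho}{2}d^2_{\R^m_-}\left(\tfrac{\lambda^*}{\rho}+h(\xbar_k;\theta^*)\right)-\tfrac{1}{2\rho}\|\lambda^*\|^2$. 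I would then use nonexpansivity of the complementary projection (the fact behind Lemma~\ref{Lemma:d_triangle}) to get $d_{\R^m_-}\left(\tfrac{\lambda^*}{\rho}+h(\xbar_k;\theta^*)\right)\leq \tfrac{1}{\rho}\|\lambda^*\|+d_{\R^m_-}(h(\xbar_k;\theta^*))\leq \tfrac{1}{\rho}\|\lambda^*\|+{\cal V}(k)$ by Theorem~\ref{Theorem:Aug_prim_infeas_cons_rho}; squaring, substituting above, and cancelling the two $\tfrac{1}{2\rho}\|\lambda^*\|^2$ terms leaves exactly $f(\xbar_k;\theta^*)-f^*\geq -\tfrac{\rho}{2}{\cal V}^2(k)-\|\lambda^*\|{\cal V}(k)$.

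For the \textbf{upper bound} I would first record that, since $u_i:=\nabla_\lambda{\cal L}_\rho(x_i,\lambda_i;\theta_i)=(\lambda_{i+1}-\lambda_i)/\rho$ by Step~2 of Algorithm~\ref{Alg: Aug_const_rho} and ${\cal L}_\rho(x_i,\lambda_i;\theta_i)=f(x_i;\theta_i)+\lambda_i^\top u_i+\tfrac{\rho}{2}\|u_i\|^2$, one has the identity ${\cal L}_\rho(x_i,\lambda_i;\theta_i)=f(x_i;\theta_i)+\tfrac{1}{2\rho}\big(\|\lambda_{i+1}\|^2-\|\lambda_i\|^2\big)$. Second, taking $x^*$ an optimal solution of ${\cal C}(\theta^*)$ (attained by compactness of $X$), $g_\rho(\lambda_i;\theta_i)\leq {\cal L}_\rho(x^*,\lambda_i;\theta_i)$; expanding via \eqref{Aug_L_formu} and using $f(x^*;\theta_i)\leq f^*+L_f\|\theta_i-\theta^*\|$ (Assumption~\ref{Assump: Aug_1}(ii)), $d_{\R^m_-}(h(x^*;\theta_i))\leq L_h\|\theta_i-\theta^*\|$ (Assumption~\ref{Assump: Aug_1}(iii), since $h(x^*;\theta^*)\leq 0$ and $[\cdot]_+$ is $1$-Lipschitz), and nonexpansivity to control $d_{\R^m_-}\left(\tfrac{\lambda_i}{\rho}+h(x^*;\theta_i)\right)\leq\tfrac{1}{\rho}\|\lambda_i\|+L_h\|\theta_i-\theta^*\|$, the $\tfrac{1}{2\rho}\|\lambda_i\|^2$ terms cancel and leave $g_\rho(\lambda_i;\theta_i)\leq f^*+L_f\|\theta_i-\theta^*\|+L_h\|\lambda_i\|\,\|\theta_i-\theta^*\|+\tfrac{\rho}{2}L_h^2\|\theta_i-\theta^*\|^2$. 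Combining this with Step~1 (${\cal L}_\rho(x_i,\lambda_i;\theta_i)\leq g_\rho(\lambda_i;\theta_i)+\alpha_i$), the identity above, and $f(x_i;\theta_i)\geq f(x_i;\theta^*)-L_f\|\theta_i-\theta^*\|$ gives, for each $i$, $f(x_i;\theta^*)+\tfrac{1}{2\rho}\big(\|\lambda_{i+1}\|^2-\|\lambda_i\|^2\big)\leq f^*+2L_f\|\theta_i-\theta^*\|+L_h\|\lambda_i\|\,\|\theta_i-\theta^*\|+\tfrac{\rho}{2}L_h^2\|\theta_i-\theta^*\|^2+\alpha_i$.

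To finish I would sum this over $i=0,\dots,k$, use $\lambda_0=\mathbf{0}$ and $\|\lambda_{k+1}\|^2\geq 0$ to drop the telescoped dual term, and apply convexity of $f(\cdot;\theta^*)$ so that $f(\xbar_k;\theta^*)\leq\tfrac{1}{k+1}\sum_{i=0}^k f(x_i;\theta^*)$; then $\|\theta_i-\theta^*\|\leq q_\ell^i\|\theta_0-\theta^*\|$ (Assumption~\ref{Assump: learn_lin_rate}), $\|\lambda_i\|\leq\bar C:=\|\lambda^*\|+C_\lambda$ (Theorem~\ref{Theorem: Aug_lambda_bnd_cont_rho}), and $\sum_i\alpha_i<\infty$ (which follows from Assumption~\ref{Assump:Aug_2}) turn the geometric sums into the factors $\tfrac{1}{1-q_\ell}$ and $\tfrac{1}{1-q_\ell^2}$, so that after dividing by $k+1\geq k$ the right-hand side becomes exactly $U/k$ with $U$ as stated. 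The \textbf{main obstacle} is the bookkeeping with the misspecification in the upper bound: $\theta_i\neq\theta^*$ enters in three places --- $f(x^*;\theta_i)$ versus $f^*$, the violated feasibility of $x^*$ at $\theta_i$, and $f(x_i;\theta_i)$ versus $f(x_i;\theta^*)$ --- and one must check that each application of \eqref{Aug_L_formu} cancels the quadratic term $\tfrac{1}{2\rho}\|\lambda_i\|^2$ exactly, leaving a per-iteration residual that is summable in $i$; this is precisely where the linear learning rate and the summability of $\{\alpha_i\}$ are used. By contrast, the lower bound is essentially immediate once the $d_{\R^m_-}$ triangle-type estimate is in hand.
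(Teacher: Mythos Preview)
Your proposal is correct and follows essentially the same approach as the paper: the lower bound via $f^*=g_\rho(\lambda^*;\theta^*)\leq {\cal L}_\rho(\xbar_k,\lambda^*;\theta^*)$ together with Lemma~\ref{Lemma:d_triangle} and Theorem~\ref{Theorem:Aug_prim_infeas_cons_rho}, and the upper bound via ${\cal L}_\rho(x_i,\lambda_i;\theta_i)\leq {\cal L}_\rho(x^*,\lambda_i;\theta_i)+\alpha_i$, the telescoping identity $\tfrac{1}{2\rho}(\|\lambda_{i+1}\|^2-\|\lambda_i\|^2)$, Lipschitz continuity of $f$ and $h$ in $\theta$, the bound $\|\lambda_i\|\leq\bar C$, and the geometric sums from Assumption~\ref{Assump: learn_lin_rate}. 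The only cosmetic difference is that you package the left-hand side through the identity ${\cal L}_\rho(x_i,\lambda_i;\theta_i)=f(x_i;\theta_i)+\lambda_i^\top u_i+\tfrac{\rho}{2}\|u_i\|^2$, whereas the paper substitutes $d_{\R^m_-}\!\left(h(x_i;\theta_i)+\lambda_i/\rho\right)=\|\lambda_{i+1}\|/\rho$ directly; the resulting per-iteration inequality and the summation are identical.
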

\begin{proof}
\noindent {\bf Proof of the lower bound.} Since $\sup_\lambda g_\rho(\lambda;\theta^*)=\min_{x\in X} {\cal L}_\rho(x,\lambda^*;\theta^*)=f^*$, we have that for all $k\geq 0$,
\begin{align*}
f^*&\leq {\cal L}_\rho(\xbar_k,\lambda^*;\theta^*)\\&=f(\xbar_k;\theta^*)+{\rho\over
		2}d^2_{\mathbb{R}^m_-}\Big(h(\xbar_k;\theta^*)+{\lambda^*\over \rho}\Big)-{\|\lambda^*\|^2\over 2\rho}
\\&\leq f(\xbar_k;\theta^*)+{\rho\over
	2}\left(d_{\mathbb{R}^m_-}\left(h(\xbar_k;\theta^*)\right)+{\|\lambda^*\|\over \rho}\right)^2-{\|\lambda^*\|^2\over 2\rho},
\end{align*}
where the first equality is a consequence of \eqref{Aug_L_formu} while
the second inequality follows from Lemma~\ref{Lemma:d_triangle}. By expanding the second term above inequality, we obtain
\begin{align*}
%f(x^*;\theta^*)
f^*  %\leq f(\xbar_k;\theta^*)-{\|\lambda^*\|^2\over 2\rho}\\
%& +{\rho\over 2}\Bigg[d_{\mathbb{R}^m_+}\Big(-h(\xbar_k;\theta^*)\Big)^2+\Big({\|\lambda^*\|\over \rho}\Big)^2+2d_{\mathbb{R}^m_+}\Big(-h(\xbar_k;\theta^*)\Big){\|\lambda^*\|\over \rho}\Bigg]\\
&\leq f(\xbar_k;\theta^*)+\tfrac{\rho}{2}d^2_{\mathbb{R}^m_-}\left(h(\xbar_k;\theta^*)\right)+d_{\mathbb{R}^m_-}\left(h(\xbar_k;\theta^*)\right)\|\lambda^*\|\\
&\leq f(\xbar_k;\theta^*)+{\rho \over 2} {\cal V}^2(k)+\|\lambda^*\|{\cal V}(k),
\end{align*}
where the last inequality follows from
Theorem~\ref{Theorem:Aug_prim_infeas_cons_rho}.\\ %The result follows from a rearrangement of the terms.
\noindent {\bf Proof of the upper bound.} Let $x^*$ be an optimal solution to $\mathcal{C}(\theta^*)$. Step 1 of Algorithm~\ref{Alg: Aug_const_rho} implies that for all $i\geq 0$
\begin{align*}
 {\cal L}_\rho(x_i,\lambda_i;\theta_i)
 %&\leq  \inf_{x\in X} {\cal L}_\rho(x,\lambda_k;\theta_{k})+\alpha_k
 \leq  {\cal L}_\rho(x^*,\lambda_i;\theta_i)+\alpha_i.
 \end{align*}
 Hence, by the definition of ${\cal L}_\rho$, it follows that
\begin{eqnarray*}
\lefteqn{f(x_i;\theta_i)+{\rho\over 2}d^2_{\mathbb{R}^m_-}\left(h(x_i;\theta_i)+{\lambda_i\over \rho}\right)-{\|\lambda_i\|^2\over 2\rho}\leq}\\
& & f(x^*;\theta_i)+{\rho\over 2}d^2_{\mathbb{R}^m_-}\left(h(x^*;\theta_i)+{\lambda_i\over \rho}\right)-{\|\lambda_i\|^2\over 2\rho}+\alpha_i,
\end{eqnarray*}
which leads to
\begin{eqnarray}
\lefteqn{f(x_i;\theta_i)-f(x^*;\theta_i) \leq} \label{proof:primal_sub_opt_cons_rho1}\\
& & \tfrac{\rho}{2}d^2_{\mathbb{R}^m_-}\left(h(x^*;\theta_i)+{\lambda_i\over \rho}\right)-\tfrac{\rho}{2}d^2_{\mathbb{R}^m_-}\left(h(x_i;\theta_i)+{\lambda_i\over \rho}\right)+\alpha_i.\notag
\end{eqnarray}
Step 2 of Algorithm~\ref{Alg: Aug_const_rho} implies that
\begin{align}
d_{\mathbb{R}^m_-}\left(h(x_i;\theta_i)+{\lambda_i\over \rho}\right)={\|\lambda_{i+1}\|\over \rho}.
\label{proof:primal_sub_opt_cons_rho2}
\end{align}
In addition, by using Lemma~\ref{Lemma:d_triangle}, it follows that
\begin{align}
d_{\mathbb{R}^m_-}\left(h(x^*;\theta_i)+{\lambda_i\over \rho}\right)\leq d_{\mathbb{R}^m_-}\left(h(x^*;\theta_i)\right)+{\|\lambda_i\|\over \rho}.
\label{proof:primal_sub_opt_cons_rho3}
\end{align}
Substituting \eqref{proof:primal_sub_opt_cons_rho2}
and \eqref{proof:primal_sub_opt_cons_rho3} in \eqref{proof:primal_sub_opt_cons_rho1}, we obtain for all $i\geq 0$
\begin{align}
&f(x_i;\theta_i)-f(x^*;\theta_i)
  \leq {\rho \over 2}\left(d_{\mathbb{R}^m_-}\left(h(x^*;\theta_i)\right)+{\|\lambda_i\|\over \rho}\right)^2\notag\\&-{1 \over 2\rho}\|\lambda_{i+1}\|^2+\alpha_i  ={\rho \over 2}~d^2_{\mathbb{R}^m_-}\left(h(x^*;\theta_i)\right)+d_{\mathbb{R}^m_-}\left(h(x^*;\theta_i)\right)\|\lambda_i\|\notag\\
& \quad+{1\over 2\rho}\Big(\|\lambda_i\|^2-\|\lambda_{i+1}\|^2\Big)+\alpha_i.
\label{proof:primal_sub_opt_cons_rho5}
\end{align}
From Lipschitz continuity of $h_j$ in $\theta$ for $j=1,\hdots,m,$
\begin{align}\notag
h_j(x^*;\theta_i) & \leq h_j(x^*;\theta^*)+L_{h}\|\theta_i-\theta^*\|;\\
\implies d_{\mathbb{R}^m_-}\left(h(x^*;\theta_i)\right) & \leq
d_{\mathbb{R}^m_-}\left(h(x^*;\theta^*)\right)+L_{h}\|\theta_i-\theta^*\|.
\label{proof:primal_sub_opt_cons_rho4}
\end{align}
Since $h_j(x^*;\theta^*)\leq 0$ for $j=1,\hdots,m$, it follows that
$d_{\mathbb{R}^m_-}(h(x^*;\theta^*))=0$,
and inequality~\eqref{proof:primal_sub_opt_cons_rho4} becomes
\begin{align}
d_{\mathbb{R}^m_-}\left(h(x^*;\theta_i)\right)\leq L_{h}\|\theta_i-\theta^*\|.\notag
\end{align}
By substituting \eqref{proof:primal_sub_opt_cons_rho4} into~\eqref{proof:primal_sub_opt_cons_rho5}, we get for all $i\geq 0$
\begin{align}
f(x_i;\theta_i)-f(x^*;\theta_i)\leq\tfrac{\rho}{2}L^2_{h}\|\theta_i-\theta^*\|^2+\bar{C}L_{h}\|\theta_i-\theta^*\|\notag\\
 \quad+\tfrac{1}{2\rho}\left(\|\lambda_i\|^2-\|\lambda_{i+1}\|^2\right)+\alpha_i,\notag
\end{align}
where the last inequality follows from $\|\lambda_i-\lambda^*\| \leq C_{\lambda}$
(Theorem~\ref{Theorem: Aug_lambda_bnd_cont_rho}), i.e., $\|\lambda_i\|\leq \bar{C}:=C_\lambda+\|\lambda^*\|$ for all $i\geq 0$. Next, from the Lipschitz continuity of $f$ in $\theta$, it follows that
$$f(x_i;\theta_i)-f(x^*;\theta_i)\geq f(x_i;\theta^*)-f(x^*;\theta^*)-2L_{f}\|\theta_i-\theta^*\|.$$
Combining two above inequalities results in the following:
\begin{align*}
f(x_i;\theta^*)-f^*&\leq
 \tfrac{\rho}{2}L^2_{h}\|\theta_i-\theta^*\|^2+\left(\bar{C}L_{h}+2L_{f}\right)\|\theta_i-\theta^*\|\\
&  {1\over 2\rho}\Big(\|\lambda_i\|^2-\|\lambda_{i+1}\|^2\Big)+\alpha_i.
\end{align*}
Summing the above inequality for $i=0$ to $k$, we get
\begin{align*}
&\sum_{i=0}^k\Big(f(x_i;\theta^*)-f^* \Big)\leq\tfrac{\rho}{2}L^2_{h}\sum_{i=0}^k \|\theta_{i}-\theta^*\|^2+\sum_{i=1}^k \alpha_i\\&+\left(\bar{C}L_{h}+2L_{f}\right)\sum_{i=0}^k \|\theta_{i}-\theta^*\|
+\tfrac{1}{2\rho}\sum_{i=0}^k\left(\|\lambda_i\|^2-\|\lambda_{i+1}\|^2\right)\\
%&\leq\tfrac{\rho}{2}L^2_{h}\sum_{i=0}^\infty \|\theta_{i}-\theta^*\|^2+\Big[L_{h}(C_\lambda+\|\lambda^*\|)+2L_{f}\Big]\sum_{i=0}^\infty \|\theta_{i}-\theta^*\|\\
%&\quad+\tfrac{1}{2\rho}\|\lambda_0\|^2+\sum_{i=0}^\infty \alpha_i\\
&\leq\tfrac{\rho}{2}L^2_{h}\frac{\|\theta_0-\theta^*\|^2}{1-q^2}+\left(\bar{C}L_{h}+2L_{f}\right)\frac{\|\theta_0-\theta^*\|}{1-q}%+\tfrac{1}{2\rho}\|\lambda_0\|^2
+\sum_{i=0}^\infty \alpha_i.
\end{align*}
Since $f(x;\theta)$ is convex in $x$, dividing both sides of the above inequality by $k$ gives the desired result.
%\begin{align*}
%&\quad f(\xbar_k;\theta^*)-f(x^*;\theta^*) \\
%	& \leq{1 \over k+1}\Big({\rho \over 2}L^2_{h,\theta}
%			{\|\theta_0-\theta^*\|^2\over
%			1-q^2_\ell}+(L_{h,\theta}C_\lambda\\
%	&+2L_{f,\theta}){\|\theta_0-\theta^*\|\over 1-q_\ell}+{1 \over 2\rho}\|\lambda_0\|^2+\sum_{i=0}^\infty \alpha_i\Big).
%\end{align*}
\end{proof}

\section{Conclusion}\label{Sec:4}
In this paper, we consider the setting of an optimization problem
	complicated by misspecification both in the function and in the
		constraints. The %misspecified
parameter misspecification may be resolved through
		a learning problem. Suppose we have %that arises from
		%having
access to a learning
data set, collected a priori. One avenue for contending with
such a problem is through an inherently sequential approach
that solves the learning problem and utilizes this solution
in subsequently solving the computational problem. Unfortunately,
unless accurate solutions of the learning problem
are available in a finite number of iterations,
sequential approaches can only provide approximate solutions. Instead, we focus on a simultaneous approach that combines learning and computation by adopting inexact augmented Lagrangian (AL) scheme with constant penalty parameter. In this regard, we made the following contributions:
(i) Derivation of the convergence rate for dual optimality, primal infeasiblity and primal suboptimality;
(ii) Quantification of the effect of learning on the rate.

Our future work lies in deriving the overall iteration complexity
analysis, which incorporates the number of iterations required to
solve the subproblems arising in the AL scheme and quantify the
	resulting impact of learning.
\bibliographystyle{IEEEtran}
\bibliography{ref2}
%%%%%%%%%%%%%%%%%%%%%%%%%%%%%%%%%%%%%%%%%%%%%%%%%%%%%%%%%%%%%%%%%%%%%%%

%%%%%%%%%%%%%%%%%%%%%%%%%%%%%%%%%%%%%%%%%%%%%%%%%%%%%%%%%%%%%%%%%%%%%%%
\end{document}